\documentclass[12pt,reqno]{amsart}

\usepackage[T1]{fontenc}
\usepackage{ae,aecompl}
\usepackage{graphics,cancel,graphicx}
\usepackage{epsfig,psfrag,manfnt}
\usepackage{mathrsfs}
\usepackage{graphicx,mathabx}
\usepackage{bbm,subfigure,rotating,bm,float,mathdots,wasysym,scalerel}

\usepackage[all,cmtip]{xy}
\usepackage{amssymb,amsmath,amsfonts,amsthm,color}

\usepackage{scalerel,stackengine}
\stackMath
\newcommand\reallywidehat[1]{%
\savestack{\tmpbox}{\stretchto{%
  \scaleto{%
    \scalerel*[\widthof{\ensuremath{#1}}]{\kern-.6pt\bigwedge\kern-.6pt}%
    {\rule[-\textheight/2]{1ex}{\textheight}}
  }{\textheight}%
}{0.5ex}}%
\stackon[1pt]{#1}{\tmpbox}%
}

\newcommand{\calA}{\mathcal{A}}

\newcommand{\calL}{\mathcal{L}}
\newcommand{\calM}{\mathcal{M}}

\newcommand{\calO}{\mathcal{O}}
\newcommand{\calP}{\mathcal{P}}

\newcommand{\calT}{\mathcal{T}}

\newcommand{\calW}{\mathcal{W}}

\newcommand{\mC}{\mathbb{C}}
\newcommand{\mD}{\mathbb{D}}

\newcommand{\mN}{\mathbb{N}}

\newcommand{\mQ}{\mathbb{Q}}
\newcommand{\mR}{\mathbb{R}}

\newcommand{\mU}{\mathbb{U}}

\newcommand{\mZ}{\mathbb{Z}}

\newcommand{\bbe}{\mathbf{e}}

\newcommand{\bbn}{\mathbf{n}}

\newcommand{\bbv}{\mathbf{v}}

\newcommand{\bbz}{\mathbf{z}}

\newcommand{\SL}{{\textrm{SL}}}
\newcommand{\eSL}{{\textrm{\em SL}}}
\newcommand{\E}{{\textrm{E}}}
\newcommand{\eE}{{\textrm{\em E}}}

\newtheorem{theorem}{Theorem}[section]
\newtheorem{lemma}[theorem]{Lemma}
\newtheorem{corollary}[theorem]{Corollary}
\newtheorem{proposition}[theorem]{Proposition}

\theoremstyle{definition}

\newtheorem{remark}[theorem]{Remark}

\theoremstyle{definition}
\newtheorem{definition}[theorem]{Definition}

\theoremstyle{definition}

\theoremstyle{definition}

\begin{document}

\keywords{Convolution Banach algebras of Borel measures, factorisation by elementary matrices, commutative unital Banach algebras, $K$-theory}

\subjclass[2010]{Primary 15A23; Secondary 43A10, 46J10, 20H25}

 \title[]{
 Generation of the special linear group \\
 by elementary matrices \\
 in some measure Banach algebras}

\author[]{Amol Sasane}
\address{Department of Mathematics \\London School of Economics\\
     Houghton Street\\ London WC2A 2AE\\ United Kingdom}
\email{A.J.Sasane@lse.ac.uk}

 \maketitle
 
 \vspace{-1cm}
 
 \begin{abstract}
 For a commutative unital ring $R$, and $n\in \mN$, let $\SL_n(R)$ denote the special linear group over $R$, and 
 $\E_n(R)$ the subgroup of elementary matrices. Let $\calM^+$ be the Banach algebra of all complex Borel measures on $[0,+\infty)$ with the norm given by the total variation, the usual operations of addition and scalar multiplication, and with convolution. It is first shown that $\SL_n(A)=\E_n(A)$ for Banach subalgebras $A$ of $\calM^+$ that are closed under the operation $\calM^+\owns \mu \mapsto \mu_t$, $t\in [0,1]$, where 
 $\mu_t(E):=\int_E (1-t)^x d\mu(x)$ for $t\in [0,1)$, and Borel subsets $E$ of $[0,+\infty)$,  and $\mu_1:=\mu(\{0\})\delta$, where $\delta\in \calM^+$ is the Dirac measure. Using this, and with auxiliary results established in the article,   many illustrative examples of such Banach algebras $A$ are given, including several well-studied classical Banach algebras such as the class of analytic almost periodic functions. An example of a Banach subalgebra $A\subset \calM^+$, that does not possess the closure property above, but for which $\SL_n(A)=\E_n(A)$ nevertheless holds,  is also constructed. 
 \end{abstract}
 
  \vspace{-0.39cm}
 
 \tableofcontents
 
 \section{Introduction}

\noindent The aim of this article is to establish $\SL_n(R)=\E_n(R)$, where $\SL_n(R)$ is the special linear group over $R$, $\E_n(R)$ is the group of elementary matrices, and $R$ is a certain Banach algebra of measures on the half-line $[0,+\infty)$. We elaborate on this below. 

\begin{definition}[$\SL_n(R)$ and $\E_n(R)$]$\;$

\noindent Let $R$ be a commutative unital ring with multiplicative identity $1$ and additive identity element $0$. Let $n\in \mN=\{1,2,3,\cdots\}$. 

\smallskip 

\noindent $\bullet$ Let $R^{\;\!n\times n} $ denote the set of matrices over $R$ with $n$ rows and $n$ 

\noindent {\phantom{$\bullet$} }columns.  We denote by $I_n$ the identity matrix in $R^{\;\!n\times n}$, i.e., 

\noindent {\phantom{$\bullet$} }the matrix with all diagonal entries equal to $1\in R$ and off-diagonal 

\noindent {\phantom{$\bullet$} }entries $0\in R$. 

\smallskip 

\goodbreak

\noindent $\bullet$ The {\em special linear group} $\SL_n(R)$ denotes the group (with matrix 

\noindent {\phantom{$\bullet$} }multiplication) of all matrices $M\in R^{\;\!n\times n} $ whose entries belong to $R$ 

\noindent {\phantom{$\bullet$} }and the determinant $\det M$ of $M$ is $1$. The {\em general linear group} 

\noindent {\phantom{$\bullet$} }$\textrm{GL}_n(R)$ consists of all invertible matrices in  $R^{\;\!n\times n}$. 

\smallskip 

\noindent $\bullet$ An {\em elementary matrix} 
$E_{ij}(\alpha)$ over $R$ has the form $E_{ij}=I_n+\alpha \;\!\bbe_{ij}$, 

\noindent {\phantom{$\bullet$} }where $i\neq j$, $\alpha \in R$, and $\bbe_{ij}$ is the $n\times n$ matrix whose entry in the 

\noindent {\phantom{$\bullet$} }$i$th row and $j$th column is $1$, and all the other entries of $\bbe_{ij}$ are zeros. 

\smallskip 

\noindent $\bullet$ $\E_n(R)$ is the subgroup of $\SL_n(R)$ generated by the elementary 

\noindent {\phantom{$\bullet$} }matrices. 
\end{definition}

\noindent A classical question in algebra is: 
$$
(\mathbf{Q}) \textrm{ For all }n\in \mN\textrm{, is }\SL_n(R)=\E_n(R)\textrm{?} 
$$
The answer depends on the ring $R$. For example:

\smallskip 

\noindent $\bullet$ If $R=\mC$, then the answer to ($\mathbf{Q}$) is `Yes', and is an exercise in linear 

\noindent {\phantom{$\bullet$} }algebra; see for instance \cite[Chap.2, \S2, Exercise~18(c), p.71]{Art}. 

\smallskip 

\noindent $\bullet$ Let $R=\mC[z_1,\cdots, z_d]$. 

\noindent {\phantom{$\bullet$} }If $d=1$, then the answer  to ($\mathbf{Q}$)  is `Yes': This follows from the 

\noindent {\phantom{$\bullet$} }Euclidean division algorithm in $\mC[z]$. 

\noindent {\phantom{$\bullet$} }If $d=2$, then the answer  to ($\mathbf{Q}$) is 
`No': A counterexample is (\cite{Coh}):
$$
\left[ \begin{array}{cc} 1+z_1z_2 & z_1^2 \\ -z_2^2 & 1-z_1 z_2 \end{array}\right] 
\in \SL_2(\mC[z_1,z_2])\setminus E_2(\mC[z_1,z_2]).
$$
\noindent {\phantom{$\bullet$} }For $d\geq 3$, the answer  to ($\mathbf{Q}$) is `Yes': This is the $K_1$-analogue of 

\noindent {\phantom{$\bullet$} }Serre's conjecture, which is the Suslin stability theorem \cite{Sus}. 

\smallskip 

\noindent $\bullet$ Rings of continuous real- or complex-valued functions on a topological 

\noindent {\phantom{$\bullet$} }space $X$ were considered in \cite{Vas}. 

\smallskip 

\goodbreak

\noindent $\bullet$ For the ring $\calO_X$ of holomorphic functions on Stein spaces in $\mC^d$, 
the 

\noindent {\phantom{$\bullet$} }question ($\mathbf{Q}$) was posed as an explicit open problem by Gromov in 

\noindent {\phantom{$\bullet$} }\cite{Gro}, and was solved in \cite{IK1}.  See also \cite{IK0} and \cite{IK2}.

\smallskip 

\noindent $\bullet$ The question  ($\mathbf{Q}$) for some rings of `sequences' (with termwise 

\noindent {\phantom{$\bullet$} }operations) 
such as 

\vspace{0.1cm}

\phantom{AAAAAA}$\displaystyle
\begin{array}{ll}
\ell^\infty(\mZ^d)&\textrm{(bounded)}, \\
c_0(\mZ^d) & \textrm{(converging to }0\textrm{), and}\\
 s'(\mZ^d) & \textrm{(at most polynomially growing),}
\end{array}
$

\vspace{0.1cm}

\noindent {\phantom{$\bullet$} }were considered in \cite{Sas}. The ring $s'(\mZ^d)$ is isomorphic to the ring of 

\noindent {\phantom{$\bullet$} }periodic distributions on $\mR^d$ 
with the multiplication operation given 

\noindent {\phantom{$\bullet$} }by convolution.

\smallskip

\noindent $\bullet$ In \cite{DK}, it was shown that for a unital commutative ring $R$, if $n\geq 2$ 

\noindent {\phantom{$\bullet$} }and if the Bass stable rank of $R$ is $1$, then the answer to ($\mathbf{Q}$) is `Yes'. 

\noindent {\phantom{$\bullet$} }Using this result, and a result from \cite[\S7]{Mil} (Proposition~\ref{prop_11_may_2021_18:36} below), 

\noindent {\phantom{$\bullet$} }many examples of Banach algebras  
of holomorphic functions in one 

\noindent {\phantom{$\bullet$} }and several variables, like the polydisc/ball algebras, were considered 

\noindent {\phantom{$\bullet$} }for which the answer to ($\mathbf{Q}$) is `Yes'. 
 
 \smallskip 
 
 \noindent In this article, we will consider certain Banach algebras of measures. Our main tool will 
 be the known result  stated as Proposition~\ref{prop_11_may_2021_18:36} below (see \cite[\S7]{Mil}). Before stating this result, we elaborate on the Banach algebra structure of $A^{n\times n}$ for a Banach algebra $A$.
 
 Let $(A,\|\cdot\|)$  be a commutative unital Banach algebra. 
 Then $A^{n\times n} $ is a complex algebra with the usual matrix operations. 
 Let $A^{n\times 1}$ denote the vector space of all column vectors of size $n$ with entries from $A$ 
 and componentwise operations. Then $A^{n\times 1}$ is a normed space with the `Euclidean norm' 
 defined by $\displaystyle 
 \|\bbv\|_2^2 :=\|v_1\|^2+\cdots+\|v_n\|^2$ for all $\bbv$ in $A^{n\times 1}$, where 
  $\bbv$ has components denoted by $v_1,\cdots,v_n\in A$.  If $M\in A^{n\times n}$, 
 then the matrix multiplication map, 
  $
 A^{n\times 1}\owns \bbv \mapsto M\bbv \in A^{n\times 1}, 
 $  
 is 
 a continuous linear transformation, and we equip $A^{n\times n}$ with the induced operator norm, denoted by $\|\cdot\|$ again. Then $A^{n\times n}$ with this operator norm is a unital Banach algebra. Subsets of $A^{n\times n}$ are given the induced subspace topology. We also state the following observation which will be used later. 
 
 \begin{lemma}
 \label{lemma_14_june_2021_10:24}
 Let $(A,\|\cdot\|)$  be a commutative unital Banach algebra, and 
  $M=[m_{ij}]\in A^{n\times n}$, where the entry in the $i^{\textrm{\em th}}$ row and $j^{\textrm{\em th}}$ column of $M$ is denoted by $m_{ij}$, $1\leq i,j\leq n$. Then 
 $$
 \|M\|^2\leq \displaystyle \sum_{i=1}^n \sum_{j=1}^n \|m_{ij}\|^2.
 $$
 \end{lemma}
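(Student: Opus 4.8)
The plan is to estimate the operator norm directly from its definition as $\|M\| = \sup\{\,\|M\bbv\|_2 : \bbv\in A^{n\times 1},\ \|\bbv\|_2\leq 1\,\}$, by bounding $\|M\bbv\|_2$ componentwise for an arbitrary $\bbv$ with components $v_1,\dots,v_n\in A$. The $i$th component of $M\bbv$ is $\sum_{j=1}^n m_{ij}v_j$, so the whole argument reduces to controlling the $A$-norm of each such sum.

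First I would apply the triangle inequality in $A$ together with submultiplicativity of the Banach algebra norm to obtain
$$
\Big\| \sum_{j=1}^n m_{ij}v_j \Big\| \;\leq\; \sum_{j=1}^n \|m_{ij}\|\,\|v_j\|
$$
for each $i$. The right-hand side is now a sum of products of nonnegative real numbers, so the scalar Cauchy--Schwarz inequality applies and gives
$$
\sum_{j=1}^n \|m_{ij}\|\,\|v_j\| \;\leq\; \Big( \sum_{j=1}^n \|m_{ij}\|^2 \Big)^{1/2} \Big( \sum_{j=1}^n \|v_j\|^2 \Big)^{1/2} \;=\; \Big( \sum_{j=1}^n \|m_{ij}\|^2 \Big)^{1/2} \|\bbv\|_2 .
$$
Squaring and summing over $i=1,\dots,n$ then yields
$$
\|M\bbv\|_2^2 \;=\; \sum_{i=1}^n \Big\| \sum_{j=1}^n m_{ij}v_j \Big\|^2 \;\leq\; \Big( \sum_{i=1}^n \sum_{j=1}^n \|m_{ij}\|^2 \Big)\,\|\bbv\|_2^2 .
$$

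Finally, taking the supremum over all $\bbv$ with $\|\bbv\|_2\leq 1$ converts this into the claimed bound $\|M\|^2\leq \sum_{i,j}\|m_{ij}\|^2$. There is no serious obstacle here: the estimate is the exact Banach-algebra analogue of the classical fact that the Frobenius norm dominates the spectral norm of a scalar matrix, and the only point requiring care is that the first step uses the algebra structure (submultiplicativity of $\|\cdot\|$ on $A$) before one passes to a purely scalar inequality, after which Cauchy--Schwarz does all the remaining work.
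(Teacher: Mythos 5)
Your proof is correct and follows essentially the same route as the paper's: the triangle inequality plus submultiplicativity of the norm on $A$, followed by the scalar Cauchy--Schwarz inequality applied row by row, then summing over $i$ and taking the supremum over unit vectors. The only cosmetic difference is that you state Cauchy--Schwarz in square-root form before squaring, while the paper writes it directly in squared form.
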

 
 \goodbreak 
 
 \begin{proof}  Let $\bbv\in A^{n\times 1}$ have components $v_1,\cdots, v_n\in A$. Using the Cauchy-Schwarz inequality in $\mR^n$, we have 
 \begin{eqnarray*}
 \|M\bbv \|_2^2=\sum_{i=1}^n  \Big\| \sum_{j=1}^n m_{ij} v_j\Big\|^2
 \!\!\!\!&\leq&\!\!\!\! \sum_{i=1}^n \Big( \sum_{j=1}^n \|m_{ij}v_j\|\Big)^2
 \leq \sum_{i=1}^n \Big( \sum_{j=1}^n \|m_{ij}\|\|v_j\|\Big)^2
 \\
 \!\!\!\!&\leq&\!\!\!\! \sum_{i=1}^n\sum_{j=1}^n \|m_{ij}\|^2\sum_{k=1}^n \|v_k\|^2
 =
 \sum_{i=1}^n\sum_{j=1}^n \|m_{ij}\|^2 \|\bbv\|_2^2.
 \end{eqnarray*}
 As $\bbv\in A^{n\times 1}$ was arbitrary, it follows that $ \|M\|^2\leq \displaystyle \sum_{i=1}^n \sum_{j=1}^n \|m_{ij}\|^2$.
 \end{proof}

 \noindent We now state the result from \cite[\S7]{Mil}.

 \begin{proposition}
  \label{prop_11_may_2021_18:36} $\;$
 
 \noindent 
 Let $A$ be a commutative unital Banach algebra$,$ $n\in \mN,$ and $M\in \eSL_n(A)$. Then the following are equivalent:
 \begin{itemize}
 \item $M\in \eE_n (A)$.
 \item $M$ is null-homotopic.
 \end{itemize}
 \end{proposition}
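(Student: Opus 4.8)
The plan is to show that $\E_n(A)$ is exactly the path-component of the identity $I_n$ in the topological group $\SL_n(A)$, where $\SL_n(A)$ carries the subspace topology inherited from the operator norm on $A^{n\times n}$. Once this is established the stated equivalence is immediate, since ``$M$ is null-homotopic'' means that $M$ can be joined to $I_n$ by a continuous path lying inside $\SL_n(A)$.

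First I would dispose of the implication $M\in\E_n(A)\Rightarrow M$ null-homotopic, which is the easy half. For a single generator $E_{ij}(\alpha)=I_n+\alpha\,\bbe_{ij}$ with $i\neq j$, the map $[0,1]\owns t\mapsto I_n+t\alpha\,\bbe_{ij}$ is a continuous path in $\SL_n(A)$ (every matrix on it is again elementary, hence of determinant $1$) joining $I_n$ to $E_{ij}(\alpha)$. Thus each generator lies in the path-component $C$ of $I_n$. Since matrix multiplication and inversion are continuous on $\SL_n(A)$, the set $C$ is a subgroup; hence the subgroup $\E_n(A)$ generated by the elementary matrices satisfies $\E_n(A)\subseteq C$, i.e.\ every element of $\E_n(A)$ is null-homotopic.

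The substance of the proposition is the reverse inclusion, which I would derive from the key claim that $\E_n(A)$ is open in $\SL_n(A)$. As $\E_n(A)$ is a subgroup and left translations are homeomorphisms, it suffices to find one open neighbourhood $U$ of $I_n$ with $U\subseteq\E_n(A)$, and I would produce it by local Gaussian elimination. Evaluating $M-I_n$ on the $j$-th coordinate vector shows $\|m_{ij}-\delta_{ij}\|\leq\|M-I_n\|$ for every entry, while Lemma~\ref{lemma_14_june_2021_10:24} furnishes the complementary bound; together they show that the operator-norm topology agrees with the entrywise topology on $A^{n\times n}$, which legitimises carrying out all estimates entrywise. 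Consequently, if $\|M-I_n\|<\varepsilon$ for a sufficiently small $\varepsilon=\varepsilon(n)$, then each diagonal entry $m_{ii}$ is so close to $1$ that it is invertible (Neumann series), and the off-diagonal entries are small. I would then clear the off-diagonal entries by finitely many left and right multiplications by elementary matrices of the form $E_{ij}(-m_{ij}m_{jj}^{-1})$; by submultiplicativity each such operation perturbs the remaining entries only to quadratic order in $\varepsilon$, so for $\varepsilon$ small the successive pivots stay invertible and $M$ is reduced to a diagonal matrix $D=\mathrm{diag}(d_1,\dots,d_n)$. Because the determinant is a homomorphism $\SL_n(A)\to A^\times$ that kills the elementary factors, $d_1\cdots d_n=\det M=1$. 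Finally, the Whitehead-type identity $\mathrm{diag}(u,u^{-1})=w(u)\,w(1)^{-1}$ with $w(u)=E_{12}(u)\,E_{21}(-u^{-1})\,E_{12}(u)$ exhibits each $2\times 2$ block $\mathrm{diag}(u,u^{-1})$ as a product of elementary matrices; splitting $D$ into such blocks (using $d_1\cdots d_n=1$) places $D$ in $\E_n(A)$, and since the accumulated left and right multipliers $L,R$ are themselves products of elementary matrices with $LMR=D$, we conclude $M=L^{-1}DR^{-1}\in\E_n(A)$. This gives the required $U$.

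Granting openness, $\E_n(A)$ is automatically closed, being the complement of the union of its other (open) cosets. A continuous path from a null-homotopic $M$ to $I_n$ has connected image that meets the clopen set $\E_n(A)$ at $I_n$, and therefore lies entirely in $\E_n(A)$; in particular $M\in\E_n(A)$. Combined with the easy half this yields $\E_n(A)=C$ and the equivalence. The hard part will be the key lemma: executing the elimination while keeping quantitative control of the entry norms so that every pivot remains invertible throughout the finitely many steps --- this quantitative bookkeeping, underpinned by Lemma~\ref{lemma_14_june_2021_10:24} and submultiplicativity of the norm, is the crux of the argument.
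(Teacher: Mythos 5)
Your proposal cannot be compared against a proof in the paper, because the paper gives none: Proposition~\ref{prop_11_may_2021_18:36} is quoted as a known result from Milnor's \emph{Introduction to algebraic $K$-theory}, \S7, and is used as a black box. What you have written is, in substance, the standard proof of that cited result, and it is correct: the easy direction via the paths $t\mapsto I_n+t\alpha\;\!\bbe_{ij}$ together with the fact that the path-component $C$ of $I_n$ is a subgroup; the hard direction by showing $\E_n(A)$ is open in $\SL_n(A)$ (local Gaussian elimination near $I_n$, where the pivots are invertible by the Neumann series, followed by the Whitehead identity $\mathrm{diag}(u,u^{-1})=w(u)\;\!w(1)^{-1}$ to absorb the resulting diagonal matrix of determinant $1$), whence $\E_n(A)$ is clopen and must contain the connected image of any null-homotopy. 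Your use of Lemma~\ref{lemma_14_june_2021_10:24} plus the column-evaluation bound to identify the operator-norm and entrywise topologies is the right way to legitimise the entrywise estimates. The only place where you are thin is the quantitative bookkeeping in the elimination step --- verifying that after clearing the first row and column the lower-right block is again within $O(\varepsilon^2)$ of the identity so that the recursion closes, and that the telescoping factorisation $\mathrm{diag}(d_1,\dots,d_n)=\prod_k \mathrm{diag}(1,\dots,1,u_k,u_k^{-1},1,\dots,1)$ with $u_k=d_1\cdots d_k$ really exhausts $D$ when $d_1\cdots d_n=1$ --- but you flag this explicitly and the details are routine. So your argument stands on its own and effectively reconstructs the proof the paper delegates to Milnor.
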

 
\begin{definition}[Null-homotopic element of $\SL_n(A)$]$\;$

\noindent Let $A$ be a commutative unital Banach algebra, and $n\in \mN$. 
An element $M\in \SL_n(A)$ is {\em null-homotopic} if $M$ is homotopic to the identity matrix $I_n$, that is there exists a continuous map $H:[0,1]\rightarrow \SL_n(A)$ such that 
$H(0)=M$ and $H(1)=I_n$. 
\end{definition} 

\noindent The statement of our first result (Theorem~\ref{theorem_11_may_2021_20:17}) 
will be given in the following section, where we introduce the Banach algebras of complex Borel measures on $[0,+\infty)$ that we will consider. We will prove Theorem~\ref{theorem_11_may_2021_20:17} in Section~\ref{section_3}. 
In Section~\ref{section_5}, using Theorem~\ref{theorem_11_may_2021_20:17}, and with auxiliary results established there,   many illustrative examples of such Banach algebras $A$ are given, including several  classical Banach algebras such as the class of holomorphic almost periodic functions. An example of a Banach subalgebra $A\subset \calM^+$, that does not possess the closure property specified in Theorem~\ref{theorem_11_may_2021_20:17}, but for which $\SL_n(A)=\E_n(A)$ nevertheless holds,  is also constructed in Section~\ref{section_5}.


\section{Banach algebras of measures}

\noindent We recall the following classical Banach algebra $\calM^+$ of measures on a half-line; see for example \cite[\S4, pp.141-150]{HP} or \cite[Chapter 6]{Rud}.

\goodbreak

\begin{definition}[The Banach algebra $\calM^+$]$\;$

\noindent Let $\calM^+$ denote the set of all complex Borel measures on $\mR$ 
with support contained in $[0,+\infty)$. Then $\calM^+$ is a complex vector space 
with addition $+$ and scalar multiplication $\cdot$ defined as usual, and 
it becomes a complex algebra if we take the operation $\ast$ of convolution of measures as the operation of multiplication. 

Given a Borel set $E\subset [0,+\infty)$, by a {\em partition of $E$} we mean a countable collection $(E_n)_{n\in \mN}$ of Borel subsets of $E$ such that whenever $m\neq n$, we have $E_n \cap E_m=\emptyset $, and ${\scaleobj{0.81}{\bigcup\limits_{n\;\!\in\;\! \mN}}}E_n=E$. 
We define $|\mu|$ by 

\vspace{-0.1cm}

$\phantom{AAAAAAAAA}\displaystyle 
|\mu|(E)=\sup_{(E_n)_{n\;\! \in \;\!\mN}\;\!\in \;\!\calP(E)} \sum_{n=1}^\infty |\mu(E_n)|,
$

\vspace{0.1cm}

\noindent 
where $\calP(E)$ is the collection of all partitions of the Borel set $E$.  Then 
$|\mu|(E)\geq |\mu(E)|$, and $|\mu|$ is a positive measure defined on all Borel subsets of $[0,+\infty)$. 
The norm of an element $\mu\in \calM^+$ is taken as the {\em total variation measure $|\mu|$ of $[0,+\infty)$}, i.e., 
$$
\|\mu\|:=\sup_{(E_n)_{n\;\!\in \;\!\mN}\;\!\in \;\!\calP} \sum_{n=1}^\infty |\mu(E_n)|,
$$
where $\calP$ is the set of all partitions of $[0,+\infty)$. 
Then $(\calM^+, +,\cdot, \ast,\|\cdot\|)$ is a commutative unital complex Banach algebra. The unit element (i.e., identity with respect to convolution) is the {\em Dirac measure} $\delta$, given by 
$$
\delta(E)= \left\{ \begin{array}{cl} 1 & \textrm{if }\; 0\in E, \\
0 & \textrm{if }\; 0\not\in E. \end{array} \right.
$$
\end{definition} 

\begin{definition}
\label{definition_14_may_2021_20:29}
If $\mu \in \calM^+$ and $t\in [0,1)$, define $\mu_t\in \calM^+$ by 
$$
\mu_t(E):=\int_E (1-t)^x \;\! d\mu(x),
$$
for all Borel sets $E\subset [0,+\infty)$. If $t=1$, then define $\mu_1 \in \calM^+$ by 
$$
\mu_1:=\mu(\{0\}) \cdot \delta.
$$
\end{definition}

\noindent We provide motivation for Definition~\ref{definition_14_may_2021_20:29} 
 in Remark~\ref{motivation_definition_14_may_2021_20:29}. 
Our first result is the following.

\begin{theorem}
\label{theorem_11_may_2021_20:17}
Let $A$ be a Banach subalgebra of $\calM^+$ $($with the induced norm$),$ 
such that it has the following property$:$
$$
\textrm{\em(}\mathbf{P}\textrm{\em) For all }\mu \in A, \textrm{\em and  all } 
t\in [0,1], \;\mu_t \in A.
$$
Then for all $n\in \mN,$ $\eSL_n(A)=\eE_n(A)$. 
\end{theorem}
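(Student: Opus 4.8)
The plan is to apply Proposition~\ref{prop_11_may_2021_18:36}: since $A$ is a commutative unital Banach algebra, it suffices to prove that every $M\in\SL_n(A)$ is null-homotopic, whence the proposition gives $M\in\E_n(A)$. The homotopy joining $M$ to $I_n$ is built in two stages: first I would use the operation $\mu\mapsto\mu_t$ supplied by hypothesis $(\mathbf{P})$ to deform $M$ to a matrix with constant (scalar) entries, and then use the connectivity of $\SL_n(\mC)$ to reach $I_n$.

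The first step is to record that, for each fixed $t\in[0,1]$, the map $\Phi_t\colon\calM^+\to\calM^+$ defined by $\mu\mapsto\mu_t$ is a unital algebra homomorphism. Linearity is clear, and multiplicativity $(\mu\ast\nu)_t=\mu_t\ast\nu_t$ is a direct computation from the convolution formula using $(1-t)^{x+y}=(1-t)^x(1-t)^y$ when $t\in[0,1)$, together with the identity $(\mu\ast\nu)(\{0\})=\mu(\{0\})\,\nu(\{0\})$ when $t=1$; in both cases $\delta_t=\delta$. By $(\mathbf{P})$, $\Phi_t$ restricts to a unital endomorphism of $A$. Applying $\Phi_t$ entrywise to $M=[m_{ij}]$ produces $H(t):=[(m_{ij})_t]\in A^{n\times n}$, and since $\det$ is a polynomial in the entries with integer coefficients, $\det H(t)=\Phi_t(\det M)=\Phi_t(\delta)=\delta$, so that $H(t)\in\SL_n(A)$ for all $t$. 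Since $\mu_0=\mu$, we have $H(0)=M$.

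Next I would verify that $t\mapsto H(t)$ is continuous from $[0,1]$ into $A^{n\times n}$; by Lemma~\ref{lemma_14_june_2021_10:24} this reduces to continuity of $t\mapsto\mu_t$ in total variation norm for each entry $\mu$. Throughout $[0,1]$ one has $0\le(1-t)^x\le1$, so at any $t_0<1$ dominated convergence applies directly. The delicate point, and the main obstacle, is continuity at the endpoint $t=1$, where the pointwise limit of $(1-t)^x$ is the discontinuous indicator of $\{0\}$. This is handled by isolating the atom at the origin: writing $\mu=\mu(\{0\})\,\delta+\mu'$ with $\mu'$ the restriction of $\mu$ to $(0,+\infty)$, one finds $\mu_t-\mu_1=(\mu')_t$, so that
$$\|\mu_t-\mu_1\|=\int_{(0,+\infty)}(1-t)^x\,d|\mu'|(x).$$
Since $(1-t)^x\to0$ pointwise on $(0,+\infty)$ as $t\to1^-$, dominated by $1$ with $|\mu'|$ finite, this tends to $0$; this is precisely why the separate value $\mu_1=\mu(\{0\})\,\delta$ in Definition~\ref{definition_14_may_2021_20:29} is the one that makes the path continuous at the endpoint.

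Finally, the path $t\mapsto H(t)$ joins $M$ to $H(1)$, whose $(i,j)$ entry equals $m_{ij}(\{0\})\,\delta$; thus $H(1)$ is the image of the complex matrix $N=[m_{ij}(\{0\})]$ under the continuous unital embedding $\mC\ni c\mapsto c\,\delta\in A$, and $N\in\SL_n(\mC)$ because $\det H(1)=\delta$. As $\SL_n(\mC)$ is path-connected---for instance because $\SL_n(\mC)=\E_n(\mC)$ and each elementary factor $E_{ij}(\alpha)$ is joined to $I_n$ by $s\mapsto E_{ij}(s\alpha)$, $s\in[0,1]$---there is a continuous path from $N$ to $I_n$ in $\SL_n(\mC)$; transporting it through the embedding into $\SL_n(A)$ and concatenating with $t\mapsto H(t)$ exhibits $M$ as null-homotopic. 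Proposition~\ref{prop_11_may_2021_18:36} then gives $M\in\E_n(A)$, and as $M$ and $n$ were arbitrary, $\SL_n(A)=\E_n(A)$.
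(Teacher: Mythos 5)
Your proposal is correct and takes essentially the same route as the paper: apply $\mu\mapsto\mu_t$ entrywise to obtain a path in $\SL_n(A)$ from $M$ to the constant matrix $C\delta$ with $C=[m_{ij}(\{0\})]\in\SL_n(\mC)$, then use path-connectedness of $\SL_n(\mC)$ and Proposition~\ref{prop_11_may_2021_18:36}. The only deviation is that you prove the continuity of $t\mapsto\mu_t$ (the paper's Lemma~\ref{lemma_11_may_2021_20:28}) by dominated convergence---isolating the atom at $0$ for the endpoint $t=1$---rather than by the paper's explicit mean-value-theorem and tail estimates, which is a harmless streamlining.
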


\goodbreak 

\section{Proof of Theorem~\ref{theorem_11_may_2021_20:17}} 
\label{section_3}

\noindent For $\mu \in \calM^+$, it can be seen that $\|\mu_t\|\leq \|\mu\|$ for all $t\in [0,1]$. 
Also, $\delta_t=\delta$ for all $t\in [0,1]$.  For $\mu,\nu\in \calM^+$, and $t\in [0,1]$, we have 
 \begin{equation}
 \label{eq_11_may_2021_20:23}
 (\mu+\nu)_t=\mu_t+\nu_t.
 \end{equation}
 The following two results were shown in \cite{Sas}, but we include their proofs for convenience, 
 and to keep the discussion self-contained. 
 
 \begin{lemma}
 \label{lemma_11_may_2021_20:24}
 Let $\mu,\nu \in \calM^+$ and $t\in [0,1]$. Then 
 $$
 (\mu\ast \nu)_t =\mu_t \ast \nu_t.
 $$
 \end{lemma}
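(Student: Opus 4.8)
The plan is to prove the identity by evaluating both measures on an arbitrary Borel set $E\subset [0,+\infty)$, and to treat the cases $t\in[0,1)$ and $t=1$ separately, since the definition of $\mu_t$ degenerates at $t=1$. The central tool throughout is the change-of-variables formula for convolution: for every bounded Borel function $f$ on $[0,+\infty)$,
$$
\int f\, d(\mu\ast\nu) = \iint f(x+y)\, d\mu(x)\, d\nu(y),
$$
whose justification rests on Fubini's theorem.

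For $t\in[0,1)$ I would apply this formula with $f(z)=\mathbf{1}_E(z)(1-t)^z$, which gives
$$
(\mu\ast\nu)_t(E) = \int_E (1-t)^z\, d(\mu\ast\nu)(z) = \iint \mathbf{1}_E(x+y)(1-t)^{x+y}\, d\mu(x)\, d\nu(y).
$$
The key algebraic observation is the multiplicativity $(1-t)^{x+y}=(1-t)^x(1-t)^y$, valid because $x,y\geq 0$ and $1-t>0$. On the other side, since $\mu_t$ has density $x\mapsto(1-t)^x$ with respect to $\mu$ (and likewise $\nu_t$ with respect to $\nu$), applying the convolution formula to $\mu_t,\nu_t$ with $f=\mathbf{1}_E$ produces precisely the same double integral. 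Comparing the two expressions yields $(\mu\ast\nu)_t(E)=(\mu_t\ast\nu_t)(E)$ for every Borel set $E$, hence $(\mu\ast\nu)_t=\mu_t\ast\nu_t$.

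For $t=1$ I would argue directly from the definition $\mu_1=\mu(\{0\})\delta$. Since $x+y=0$ forces $x=y=0$ on $[0,+\infty)$, the indicator factorises as $\mathbf{1}_{\{0\}}(x+y)=\mathbf{1}_{\{0\}}(x)\,\mathbf{1}_{\{0\}}(y)$, and the convolution formula gives $(\mu\ast\nu)(\{0\})=\mu(\{0\})\,\nu(\{0\})$, so that $(\mu\ast\nu)_1=\mu(\{0\})\nu(\{0\})\,\delta$. On the other hand, bilinearity of convolution and the fact that $\delta$ is the convolution identity give $\mu_1\ast\nu_1=(\mu(\{0\})\delta)\ast(\nu(\{0\})\delta)=\mu(\{0\})\nu(\{0\})\,(\delta\ast\delta)=\mu(\{0\})\nu(\{0\})\,\delta$, and the two sides agree.

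The only point genuinely needing care is the rigorous justification of the convolution change-of-variables formula together with the Fubini interchange it conceals; once that is in place, the remaining computations are routine. The $t=1$ case is the one place where the $t<1$ formula cannot simply be invoked, so it must be handled on its own (although it could alternatively be recovered by a continuity argument as $t\to 1^-$).
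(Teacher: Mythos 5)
Your proposal is correct and follows essentially the same route as the paper's proof: both evaluate the two measures on an arbitrary Borel set $E$ and reduce each side, via Fubini and the multiplicativity $(1-t)^{x+y}=(1-t)^x(1-t)^y$, to the same double integral $\iint \mathbf{1}_E(x+y)\,(1-t)^{x+y}\,d\mu(x)\,d\nu(y)$. The only difference is that you treat $t=1$ as a separate case, which is slightly more careful than the paper, whose single computation written in terms of $(1-t)^x$ covers $t=1$ only under the tacit convention $0^x=\mathbf{1}_{\{0\}}(x)$.
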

 \begin{proof}
  If $E$ is a Borel subset of $[0,+\infty)$, then
$$
(\mu \ast \nu)_t (E)
=
\int_E \!\!(1-t)^x \;\!d(\mu \ast \nu)(x)
=\!\!
\displaystyle
\iint_{
\substack{x+y \;\!\in \;\!E \\  x,y\;\!\in\;\! [0,+\infty) }}\!\!
(1-t)^{x+y} \;\!d\mu(x) \;\!d \nu(y).
$$
On the other hand,
\begin{eqnarray*}
\phantom{AAA}
( \mu_t \ast \nu_t )(E)
\!\!\!\!&=&\!\!\!\!
\int_{y \;\!\in\;\! [0,+\infty)}\!\! \mu_t (E-y) \;\!d \nu_t(y)
\\
\!\!\!\!&=&\!\!\!\!
\int_{y\;\! \in\;\! [0,+\infty)}\!\!
\Big( \int_{\substack{x \;\!\in \;\!E-y  \\ x \;\!\in \;\![0,+\infty)}}
\!\!(1-t)^x\;\! d \mu(x) \Big)
d \nu_t (y)
\\
\!\!\!\!&=&\!\!\!\!
\int_{y\;\! \in\;\! [0,+\infty)}\!\!
(1-t)^y\Big( \int_{\substack{x \;\!\in \;\!E-y  \\ x \;\!\in \;\![0,+\infty)}}\!\!
(1-t)^x\;\! d \mu(x) \Big)
d \nu (y)
\\
\!\!\!\!&=&\!\!\!\!\!\!
\iint_{\substack{x+ y \;\!\in\;\! E \\ x,y \;\!\in\;\! [0,+\infty)}}\!\!
(1-t)^{x+y} \;\!d \mu(x)\;\! d \nu (y).\phantom{AAAAAAAAA}\qedhere
\end{eqnarray*}
 \end{proof}
 
\noindent We will also use the following consequence  of the Radon-Nikodym theorem 
(see for example \cite[Theorem~6.12]{Rud}): If $\mu \in \calM^+$, 
then there exists `polar decomposition' of $\mu$, that is, 
there exists a measurable function $h$ such that $|h(x)|=1$ for all $x\in [0,+\infty)$, and $d\mu=h\;\!d|\mu|$. 

\begin{lemma}
\label{lemma_11_may_2021_20:28}
Let $\mu\in \calM^+$ and $t_0\in [0,1]$. Then 
$$
\lim_{t\;\!\rightarrow\;\! t_0}\mu_t =\mu_{t_0}.
$$
\end{lemma}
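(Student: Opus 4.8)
The plan is to show that $\|\mu_t - \mu_{t_0}\|\to 0$ as $t\to t_0$, where $\|\cdot\|$ is the total variation norm on $\calM^+$. The first step is to convert the norm of the difference into an ordinary integral against the finite positive measure $|\mu|$. Invoking the polar decomposition $d\mu = h\;\!d|\mu|$ with $|h|\equiv 1$ (quoted just before the statement), together with the standard fact that a measure of the form $f\;\!d\mu$ has total variation $\int |f|\;\!d|\mu|$, reduces everything to estimating concrete integrals of the powers $(1-t)^x$. I would then split the argument into two cases according to whether $t_0\in[0,1)$ or $t_0=1$, since the definition of $\mu_t$ is given by different formulas at the endpoint.

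For $t_0\in[0,1)$ and $t$ near $t_0$, the measure $\mu_t-\mu_{t_0}$ has density $(1-t)^x-(1-t_0)^x$ with respect to $\mu$, so that
$$
\|\mu_t-\mu_{t_0}\|=\int_{[0,+\infty)}\big|(1-t)^x-(1-t_0)^x\big|\;\!d|\mu|(x).
$$
For each fixed $x\geq 0$ the map $s\mapsto (1-s)^x$ is continuous, so the integrand tends to $0$ pointwise as $t\to t_0$; moreover $0\leq (1-s)^x\leq 1$ for all $x\geq 0$ and $s\in[0,1)$, so the integrand is bounded by the constant $2$, which is integrable since $|\mu|$ is finite. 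The Dominated Convergence Theorem then yields $\|\mu_t-\mu_{t_0}\|\to 0$.

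The remaining case $t_0=1$ is where I expect the only real subtlety. Here $t\to 1^-$ and $\mu_1=\mu(\{0\})\cdot\delta$ is concentrated at the origin, so the clean density formula above no longer applies directly. The key observation is that $(1-t)^0=1$, whence $\mu_t(\{0\})=\mu(\{0\})=\mu_1(\{0\})$ for every $t<1$; thus the atoms at $0$ cancel exactly in $\mu_t-\mu_1$, while on $(0,+\infty)$ one has $\mu_1=0$. Isolating the point mass in this way leaves
$$
\|\mu_t-\mu_1\|=\int_{(0,+\infty)}(1-t)^x\;\!d|\mu|(x),
$$
and since $(1-t)^x\to 0$ for each $x>0$ as $t\to 1^-$, with the same bound $(1-t)^x\leq 1$, a final application of the Dominated Convergence Theorem gives $\|\mu_t-\mu_1\|\to 0$. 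The main obstacle is precisely this endpoint: the cancellation of the atom at $0$ is what makes the otherwise discontinuous-looking definition at $t=1$ match up with the limit, after which the integral over $(0,+\infty)$ is controlled routinely.
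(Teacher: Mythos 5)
Your proof is correct, but it runs on a different engine than the paper's. The paper also uses the polar decomposition to turn the total variation norm into an integral, but it then does explicit $\epsilon$-management: for $t_0\in[0,1)$ it splits $[0,+\infty)$ into $[0,R]$ and a tail $(R,+\infty)$ of small $|\mu|$-mass, and controls $\max_{x\in[0,R]}|(1-t)^x-(1-t_0)^x|$ via the mean value theorem; for $t_0=1$ it first reduces to $\mu(\{0\})=0$ by subtracting $\mu(\{0\})\delta$, then splits $[0,r]$ and $(r,+\infty)$ and uses the bound $(1-t)^r$ on the tail. You instead invoke the identity $d|f\mu|=|f|\,d|\mu|$ to get the clean formula $\|\mu_t-\mu_{t_0}\|=\int|(1-t)^x-(1-t_0)^x|\,d|\mu|(x)$, after which a single application of the Dominated Convergence Theorem (pointwise convergence of the densities, uniform bound $2$, finiteness of $|\mu|$) settles both cases; your observation that the atoms at $0$ cancel so that $\|\mu_t-\mu_1\|=\int_{(0,+\infty)}(1-t)^x\,d|\mu|(x)$ is the exact counterpart of the paper's subtraction of $\mu(\{0\})\delta$, just phrased as a restriction of the integral rather than as linearity of $\nu\mapsto\nu_t$. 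Your route is shorter and delegates all the hard analysis to DCT, at the cost of needing the (standard, Radon--Nikodym-based) fact about total variations of measures with densities; the paper's route is longer but fully self-contained and quantitative, yielding an explicit modulus of continuity in $|t-t_0|$ on compact $x$-ranges, which DCT does not provide. One small point of rigor worth making explicit in your write-up: DCT is stated for sequences, so the limit over the continuous parameter $t$ should be justified by passing to arbitrary sequences $t_n\to t_0$, which is routine.
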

\begin{proof} $\;$

\noindent $1^\circ$
Consider first the case when $t_0 \in [0,1)$. 
Given an $\epsilon>0$,  let $R>0$ be 
large enough so that $|\mu|((R,+\infty))<\epsilon$. 
Let $t \in [0,1)$. There exists a 
Borel measurable function $w$ such that 
$$
d(\mu_t-\mu_{t_0})(x)=e^{-\;\!iw(x)}d| \mu_t-\mu_{t_0}|(x).
$$ 
Thus
\begin{eqnarray*}
\|\mu_t -\mu_{t_0}\|
\!\!\!\!&=& \!\!\!\!
|\mu_t -\mu_{t_0}|([0,+\infty))
=
\int_{[0,+\infty)} \!\!e^{iw(x)} d(\mu_t-\mu_{t_0})(x)
\\
\!\!\!\!&=&\!\!\!\! 
\Big|\int_{[0,+\infty)} \!\!e^{iw(x)} d(\mu_t-\mu_{t_0})(x)\Big|
\\
\!\!\!\!&=&\!\!\!\!
\Big| \int_{[0,+\infty)} \!\!e^{iw(x)} \Big( (1-t)^x - (1-t_0)^x \Big) \;\!
d\mu(x) \Big|.
\end{eqnarray*}
Hence
\begin{eqnarray*}
\|\mu_t -\mu_{t_0}\|  
\!\!\!\!&\leq &\!\!\!\!
\Big| \int_{[0,R]} \!\!e^{iw(x)} \Big( (1-t)^x \!- (1-t_0)^x \Big)\;\! d\mu(x) \Big|
    \\
  \!\!\!\!  & &\!\!\!\!
    +
    \Big| \int_{(R,+\infty)} \!\!e^{iw(x)} \Big( (1-t)^x \!- (1-t_0)^x \Big)\;\! d\mu(x) \Big|
    \\
  \!\!\!\!  &\leq &\!\!\!\!
    \max_{x\;\!\in\;\! [0,R] }\Big| (1-t)^x \!- (1-t_0)^x \Big| |\mu|([0,R])
    +
    2 |\mu|((R,+\infty))
    \\
 \!\!\!\!   &\leq &\!\!\!\!
     \max_{x\;\!\in \;\![0,R] }\Big| (1-t)^x \!- (1-t_0)^x \Big| |\mu|([0,+\infty))
    +
    2 \epsilon.
\end{eqnarray*}
But by the mean value theorem applied to the function 
$t\mapsto (1-t)^x$,
$$
(1-t)^x \!- (1-t_0)^x=-(t-t_0) x (1-c)^{x-1}=-(t-t_0) x \frac{(1-c)^{x}}{1-c},
$$
for some $c$ (depending on $x$, $t$ and $t_0$) in between $t$ and $t_0$.
Since $c$ lies between $t$ and $t_0$, and since both
$t$ and $t_0$ lie in $[0,1)$, and $x\in [0,R]$, it follows that $(1-c)^x \leq 1$ and
$$
\frac{1}{1-c} \leq\max\Big\{ \frac{1}{1-t}, \frac{1}{1-t_0}\Big\}.
$$
Thus using the above, and the fact that $x\in [0,R]$,
\begin{eqnarray*}
\max_{x\;\!\in\;\! [0,R] }\Big| (1-t)^x\! - (1-t_0)^x \Big|
\!\!\!\!&=&\!\!\!\!
\max_{x\;\!\in\;\! [0,R] }|t-t_0| x (1-c)^x\frac{1}{1-c}\\
\!\!\!\!& \leq&\!\!\!\! |t-t_0|\cdot R \cdot 1 \cdot \max\Big\{ \frac{1}{1-t}, \frac{1}{1-t_0}\Big\}.
\end{eqnarray*}
Hence we have
\begin{eqnarray*}
\!\!\!\!&&\!\!\!
\limsup_{t \;\!\rightarrow \;\!t_0}
\Big(\max_{x\;\!\in \;\![0,R] }\Big| (1-t)^x - (1-t_0)^x \Big| 
|\mu|([0,+\infty))\Big)
\\
\!\!\!\!&\leq&\!\!\!
\limsup_{t\;\! \rightarrow\;\! t_0}
\Big(|t-t_0| \cdot R\cdot \max\Big\{ \frac{1}{1-t}, \frac{1}{1-t_0}\Big\} \cdot|\mu|([0,+\infty))\Big)
\\
\!\!\!\!&=&\!\!\!
0 \cdot R \cdot \frac{1}{1-t_0}|\mu|([0,+\infty))
=0.
\end{eqnarray*}
Consequently,
$$
\limsup_{t \;\!\rightarrow \;\!t_0} \|\mu_t -\mu_{t_0}\| \leq 2\epsilon.
$$
But the choice of $\epsilon>0$ was arbitrary, and so
$$
\limsup_{t \;\!\rightarrow \;\!t_0} \|\mu_t -\mu_{t_0}\|=0.
$$
Since $\|\mu_t -\mu_{t_0}\|\geq 0$, we conclude that
$$
\lim_{t \;\!\rightarrow \;\!t_0} \|\mu_t -\mu_{t_0}\|=0.
$$

\noindent $2^\circ$ Now let us consider the case when $t_0=1$. Assume initially that
$\mu(\{0\})=0$. We will show that
$$
\lim_{t\;\!\rightarrow \;\!1} \mu_t=0
$$
in $\calM^+$.  Given an $\epsilon>0$, first choose a $r>0$ small
enough so that $|\mu|([0,r])<\epsilon$. This is possible, since
$\mu(\{0\})=0$.  There exists a Borel measurable function $w$ such
that $ d\mu_t(x)= e^{- \;\! iw(x)}d| \mu_t|(x)$.  Thus
  \begin{eqnarray*}
    \|\mu_t\|
    \!\!\!&=&\!\!\! |\mu_t |([0,+\infty))
    =
    \int_{[0,+\infty)} \!\!\!e^{iw(x)}\;\! d\mu_t(x)
    \\
   \!\!\! &=&\!\!\!
    \int_{[0,+\infty)}\!\!\! e^{iw(x)}(1-t)^x  d\mu(x)
    =
    \Big| \int_{[0,+\infty)}\!\!\! e^{iw(x)} (1-t)^x  d\mu(x) \Big|
    \\
   \!\!\! &\leq &\!\!\!
    \Big| \int_{[0,r]} \!\!\!e^{iw(x)} (1-t)^x  d\mu(x) \Big|
    +
    \Big| \int_{(r,+\infty)} \!\!\!e^{iw(x)} (1-t)^x d\mu(x) \Big|
    \\
   \!\!\! &\leq &\!\!\!
    |\mu|([0,r])
    +
    (1-t)^r |\mu|((r,+\infty))\phantom{\Big| \int_{(r,+\infty)} }
    \\
    \!\!\!&\leq &\!\!\!
    \epsilon+
    (1-t)^r |\mu|([0,+\infty)).\phantom{\Big| \int_{(r,+\infty)} }
\end{eqnarray*}
Consequently,
$\displaystyle 
\limsup_{t \;\!\rightarrow \;\!1} \|\mu_t \| \leq \epsilon.
$ 
As $\epsilon>0$ was arbitrary, 
$$
\limsup_{t \;\!\rightarrow\;\! 1} \|\mu_t \|=0.
$$
Since $\|\mu_t \|\geq 0$, we conclude that
$\displaystyle 
\lim_{t \;\!\rightarrow \;\!1} \|\mu_t \|=0.
$ 

Finally, if $\mu(\{0\})\neq 0$, then define
 $
\nu:= \mu- \mu(\{0\})\delta \in \calM^+.
$ 
It is clear that $\nu(\{0\})=0$ and $\nu_t=\mu_t-
\mu(\{0\})\delta$.  Since
$$
\displaystyle 
\lim_{t \;\!\rightarrow \;\!1} \nu_t =0,
$$
we obtain
 $\displaystyle 
\lim_{t\;\! \rightarrow \;\!1} \mu_t =\mu(\{0\})\delta
$ 
in $\calM^+$.
\end{proof}

\noindent In the final section, we will use the Laplace transform of measures $\mu \in \calM^+$:
For $\mu \in \calM^+$, we define for $s\in \mC_{{\scaleobj{0.81}{\geq 0}}}:=\{s\in \mC:\textrm{Re}(s)\geq 0\}$,
$$
\widehat{\mu}(s)=\int_{[0,+\infty)} \!\!\!e^{-sx} d\mu(x).
$$
Let $d\mu =e^{iw(x)} d|\mu|(x)$ for a measurable function $w$. For all $s\in  \mC_{{\scaleobj{0.81}{\geq 0}}}$,
$$
|\widehat{\mu}(s)|
\!=\!
\Big| \int_{[0,\infty)} \!\!\!\!e^{-sx} d\mu(x)\Big| 
\!=\!
\Big|\int_{[0,\infty)} \!\!\!\!e^{-\;\! iw(x)} e^{-sx} d|\mu|(x) \Big| 
\!\leq \!
 \int_{[0,\infty)} \!\!\!\!1\;\! d|\mu|(x) 
 \!=\!
 \|\mu\|.
$$
For $t\in [0,1)$, $s\in  \mC_{{\scaleobj{0.81}{\geq 0}}}$ and $\mu \in \calM^+$, $\widehat{\mu_t} (s)=\widehat{\mu}(s-\log(1\!-t))$. Indeed,
$$
\widehat{\mu_t} (s)
\!=\!
\int_{[0,\infty)} \!\!\!\!e^{-sx} (1\!-t)^x d\mu(x)
\!=\!
\int_{[0,\infty)} \!\!\!\!e^{-sx}e^{x\log (1-t)} d\mu
\!=\!
\widehat{\mu}(s-\log(1\!-t)).
$$
Recall that $\mu_1=\mu(\{0\})\delta$. As 
$\widehat{\delta}(s)=1$ for all $s\in \mC_{{\scaleobj{0.81}{\geq 0}}}$, 
$\widehat{\mu_1}(s)=\mu(\{0\})$. As with the Laplace transforms of ${\textrm{L}}^1$ functions, the same proof, mutatis mutandis, shows 
$\widehat{\mu\ast \nu}(s)=\widehat{\mu}(s)\;\! \widehat{\nu}(s)$ for $s\in \mC_{{\scaleobj{0.81}{\geq 0}}}$ and $\mu,\nu \in \calM^+$. 

We provide some motivation for Definition~\ref{definition_14_may_2021_20:29} in the following remark.

\begin{remark}
\label{motivation_definition_14_may_2021_20:29} 
We want to create a homotopy taking  $\mu\in \SL_n(\calM^+)$ to $I_n$ 
(with all diagonal entries equal to $\delta$). 
In the case of disc algebra, one uses dilations $f\mapsto f(t\;\!\cdot)$, $t\in [0,1]$ to take 
$I_n$ to $f\in \SL_n(A(\mD))$. 
Motivated by this, and bearing in mind that the Laplace transform $\widehat{\mu}$ of $\mu\in \calM^+$ 
satisfies the following analogue of the Riemann-Lebesgue lemma (we include a proof of this  in the Appendix) 
$$
\lim_{\mR\;\!\owns \;\!s\;\!\rightarrow \;\!+\infty} \widehat{\mu}(s)=\mu(\{0\}),
$$
it is natural to try and construct a homotopy by `translating' the Laplace transform and then taking the inverse Laplace transform. As 
$$
\lim_{t\;\!\rightarrow \;\!1-} -\log(1-t)=+\infty, 
$$ 
we try 
$$
H(t):=\mu_t:= \textrm{Inverse Laplace transform of }\pmb{(}s\mapsto \widehat{\mu}(s-\log(1-t))\pmb{)}.
$$
To determine $\mu_t$, we write 
$$
\int_{[0,+\infty)} \!\!\!\!e^{-(s-\log(1-t))x} d\mu(x)
\!=\!
\int_{[0,+\infty)} \!\!\!\!e^{-s x} (1\!-\!t)^x d\mu(x) 
\!=\!
\int_{[0,+\infty)} \!\!\!\!e^{-s x} d\mu_t(x),
$$
where $\mu_t$ is as in Definition~\ref{definition_14_may_2021_20:29}. \hfill$\Asterisk$
\end{remark}

 
 \noindent Finally, we are ready to prove Theorem~\ref{theorem_11_may_2021_20:17}. 
 
 \begin{proof}(of Theorem~\ref{theorem_11_may_2021_20:17}): 
 We need to show $\SL_n(A)\subset \E_n(A)$. Let
 $$
 M=\left[\!\!\begin{array}{ccc} \mu_{11} &\cdots & \mu_{1n}\\ 
 \vdots & \ddots & \vdots \\
 \mu_{n1} & \cdots & \mu_{nn} \end{array}\!\! \right]\in \SL_n(A).
 $$
 For $t\in [0,1]$, define 
 $$
 H(t)=M_t=\left[\!\!\begin{array}{ccc} (\mu_{11})_t &\cdots & (\mu_{1n})_t\\ 
 \vdots & \ddots & \vdots \\
 (\mu_{n1})_t & \cdots & (\mu_{nn})_t \end{array} \!\!\right].
 $$
 Thanks to \eqref{eq_11_may_2021_20:23} and Lemma~\ref{lemma_11_may_2021_20:24}, 
 as $M\in \SL_n(A)$, we have $M_t\in \SL_n(A)$ for all $t\in [0,1]$, 
 since 
 $$
 \delta=\delta_t=(\det M)_t= \det (M_t).
 $$ 
 The continuity of $[0,1]\owns t\mapsto M_t \in \SL_n(A)$ follows from 
 Lemma~\ref{lemma_11_may_2021_20:28} and Lemma~\ref{lemma_14_june_2021_10:24}.  We note that $M_1= C \delta$, 
 where $C\in \SL_n(\mC)$ is the constant matrix 
 $$
 C=\left[\!\! \begin{array}{ccc}
 \mu_{11}(\{0\}) & \cdots &  \mu_{1n}(\{0\})\\
 \vdots & \ddots & \vdots \\
  \mu_{n1}(\{0\}) & \cdots &  \mu_{nn}(\{0\})
 \end{array}\!\!\right] .
 $$
 But $\SL_n(\mC)$ is path connected, and so there exists a homotopy, say $h:[0,1]\rightarrow \SL_n(\mC)$, 
 taking $C$ to the identity matrix $I_n\in \SL_n(\mC)$. Combining $H$ with $h$, 
 we obtain a homotopy $\widetilde{H}:[0,1]\rightarrow \SL_n(A)$ that takes $M$ to $I_n\in \SL_n(A)$:
 $$
 \widetilde{H}(t)=\left\{\begin{array}{ll} H(2t) & \textrm{if }\; t\in \left[0,\displaystyle
 {\scaleobj{0.81}{\frac{1}{2}}}\right],\\[0.3cm]
 h(2t-\!1) \delta & \textrm{if }\; t\in \left[{\scaleobj{0.81}{\displaystyle\frac{1}{2}}},1\right].\end{array}\right.
 $$
 So $M\in \SL_n(A)$ is null-homotopic. By Proposition~\ref{prop_11_may_2021_18:36}, $M\in \E_n(A)$. 
 \end{proof}
 
 \section{Examples}
 \label{section_5}
 
\noindent Using Theorem~\ref{theorem_11_may_2021_20:17}, and with auxiliary results,  we give many illustrative examples of classical Banach algebras $A$ for which $\SL_n(A)=\E_n(A)$.
 
 \subsection{The measure algebra $\calM^+$} $\;$
 
 \noindent 
 Trivially, the full algebra $\calM^+$  has  the property ($\mathbf{P}$). So for all $n\in \mN$, $\SL_n(\calM^+)=\E_n(\calM^+)$.
 
 \goodbreak
 
 \subsection{The Wiener-Laplace algebra $\delta \;\!\mC+{\textrm{\normalfont L}}^1[0,+\infty)$} $\;$
 
 \noindent 
 Consider the Wiener-Laplace algebra $\calW^+$ of the half plane,
  of all functions defined in the half plane $\mC_{{\scaleobj{0.81}{\geq 0}}}:=\{s\in \mC:\textrm{Re}(s)\geq 0\}$ that
  differ from the Laplace transform of an ${\textrm{L}}^{1}[0,+\infty)$ function
  by a constant. 
  
  \goodbreak 
  
  \noindent The Wiener-Laplace algebra $\calW^+$ is a Banach algebra with pointwise operations, 
  and the norm $
  \|\widehat{f}+\alpha\|_{\calW^+}=\|f\|_{1}+|\alpha|, $ for all $\alpha \in \mC$ and all $f \in
  {\textrm{L}}^{1}[0,+\infty)$,  
  where 
  $$
  \|f\|_1:=\int_0^{+\infty}\!\! |f(x)|dx.
  $$
  Then $\calW^+$ is precisely the set of Laplace transforms of
  elements of the Banach subalgebra $A=\delta \;\!\mC+{\textrm{L}}^1[0,+\infty) $ of $\calM^+$ consisting of all complex
  Borel measures $\mu= \mu_a+\alpha\;\! \delta$, where $\mu_a$ is
  absolutely continuous (with respect to the Lebesgue measure) and
  $\alpha \in \mC$. Recall that $\mu_a\in \calM^+$ is {\em absolutely continuous}  
  if there exists an $f\in {\textrm{L}}^1[0,+\infty)$ such that, with the 
  Lebesgue measure on $[0,+\infty)$ denoted by $dx$, we have  
  $$
  \mu_a=\int_{{\scaleobj{2}{\cdot}}}f\;\!dx,
  $$
   that is, 
   $$
    \mu_a(E)=\int_E f(x) \;\! dx \textrm{ for all Borel }E\subset [0,+\infty).
  $$
  The $\calM^+$-norm of $\mu\in A\!=\!\delta \;\!\mC\!+\!{\textrm{L}}^1[0,+\infty)\subset \calM^+$ 
  is 
  $$
  \|\mu\|\!=\!\|f\|_{1}\!+\!|\alpha|
  $$ 
  for 
  $$
  \mu=\alpha\;\! \delta+\int_{{\scaleobj{2}{\cdot}}}f\;\!dx.
  $$
    This Banach subalgebra $A=\delta \;\!\mC+{\textrm{L}}^1[0,+\infty)$ of $\calM^+$ has the property ($\mathbf{P}$), and so for all $n\in \mN$, we have 
  \begin{eqnarray*}
   \SL_n(\delta \;\!\mC+{\textrm{L}}^1[0,+\infty))\!\!\!\!&=&\!\!\!\!
   \E_n(\delta \;\!\mC+{\textrm{L}}^1[0,+\infty)), \textrm{ and}\\
   \SL_n(\calW^+)\!\!\!\!&=&\!\!\!\! \E_n(\calW^+).
  \end{eqnarray*}

 \subsection{Measures without a singular nonatomic part} $\;$
 
 \noindent If $\lambda\geq 0$, then we use the notation $\delta_{\{\lambda\}}\in \calM^+$ to denote the Dirac measure with support $\{\lambda\}$, that is, for all Borel subsets $E\subset [0,+\infty)$,
$$
\delta_{\{\lambda\}}(E)=\left\{\begin{array}{ll} 1 & \textrm{if }\; \lambda\in E,\\
0 & \textrm{if }\; \lambda\not\in E.
\end{array}\right.
$$
(So $\delta=\delta_{\{0\}}$.) 
Define the subalgebra $\calA^+$ of $\calM^+$ consisting of all complex Borel measures that do not have a singular
  non-atomic part. 
  
  \noindent The algebra $\calA^+$ is the set of all $\mu\in \calM^+$ that have a 
  decomposition 
  $$
 \mu={\scaleobj{0.9}{\int_{{\scaleobj{0.72}{\bullet}}}}} \;\!f\;\!dx +\!\!\!\!\!\!\!\!\!\!\!\!\!\!\!\!
 {\scaleobj{0.9}{\sum_{\phantom{AAAAA}0=\lambda_0<\lambda_1,\lambda_2,\lambda_3,\cdots}}}
 \!\!\!\!\!\!\!\! f_n \delta_{\{\lambda_n\}},
$$
where 
\begin{eqnarray*} 
&& f\in {\textrm{L}}^1[0,\infty),\\
&& dx\textrm{  is the Lebesgue measure on }[0,+\infty),\\
&&  \lambda_n\in [0,+\infty) \textrm{ for all }n\in \mZ_{{\scaleobj{0.72}{\geq 0}}}:=\{0,1,2,3,\cdots\},\\
&& (f_n)_{n\geq 0}\textrm{ is an absolutely summable sequence of complex numbers,}\\
&& \phantom{ (f_n)_{n\geq 0}} \textrm{ i.e., }
\|(f_n)_{n\geq 0}\|_1:=\!
{\scaleobj{0.9}{\sup_{\substack{F\textrm{ finite}\\
F\;\!\subset\;\!\mZ_{{\scaleobj{0.81}{\geq 0}}}}} \;\sum_{n\;\!\in\;\! F} }} \;|f_n|<\infty.
\end{eqnarray*}
 The $\calM^+$-norm of $\mu\in \calA^+\subset \calM^+$ reduces to 
$$
\|\mu\|=\|f\|_{1} +\|(f_n)_{n\geq 0}\|_1\textrm{ for } 
 \mu=
 {\scaleobj{0.9}{\int_{{\scaleobj{0.71}{\bullet}}}}}\;\! f\;\!dx+\!\!\!\!\!\!\!\!\!\!\!\!\!\!\!\!
 {\scaleobj{0.9}{\sum_{\phantom{AAAAA}0=\lambda_0<\lambda_1,\lambda_2,\lambda_3,\cdots}}}
 \!\!\!\!\!\!\!\!  f_n \delta_{\{\lambda_n\}}\in \calA^+.
 $$
 The Banach subalgebra $\calA^+$ of $\calM^+$ possesses the property ($\mathbf{P}$). 
 Hence $\SL_n(\calA^+)=\E_n(\calA^+)$ for all $n\in \mN$. 
 We remark that the Bass stable rank of $\calA^+$ is $\infty$ 
 (see \cite{MikSas}). 
 
 \subsection{The analytic almost periodic Wiener algebra $\textrm{\normalfont APW}^+$} $\;$
 
 \noindent Define the subalgebra $\calA^+_0$ of $\calM^+$ to be the set of all 
 measures $\mu\in \calM^+$ that are of the form 
 $$
 \mu=\!\!\!\!\!\!\!\!\!\!\!\!\!\!\!\!
 {\scaleobj{0.9}{\sum_{\phantom{AAAAA}0=\lambda_0<\lambda_1,\lambda_2,\lambda_3,\cdots}}}
 \!\!\!\!\!\!\!\! \!\!\!\!\!\!f_n \delta_{\{\lambda_n\}},
 $$
 where the sequence of coefficients $(f_n)_{n\geq 0}$ is absolutely summable. 
 The  $\calM^+$-norm of $\mu\in \calA_0^+\subset \calM^+$ is 
 given by 
 $ \|\mu\|=\|(f_n)_{n\geq 0}\|_1$ for $\mu \in \calA_0^+$. 
 The algebra $\calA_0^+$ is a Banach subalgebra of $\calM^+$, 
 and has the property ($\mathbf{P}$). 
 Hence for all $n\in \mN$, $\SL_n(\calA^+_0)=\E_n(\calA^+_0)$. 
 
 Recall the classical Banach algebra $\textrm{APW}^+$ of almost periodic functions $f$, 
 whose Bohr-Fourier coefficients $(f_\lambda)_{\lambda \in \mR}$, are summable,  
 and $f_\lambda=0$ for $\lambda<0$. The algebra operations in $\textrm{APW}^+$ are defined pointwise,  and the  norm is given by  
 $$
 \|f\|_{1}=
 {\scaleobj{0.9}{\sum_{\lambda\geq 0}}}\; |f_\lambda|.
 $$
 (See  the following Subsection~\ref{subsection_12_may_2021_18:06} for an explanation of the key terms.) 
 The Banach algebra $\calA_0^+$ is isomorphic as a Banach algebra to $\textrm{APW}^+$ via the isomorphism 
 $$
 \calA_0^+\owns \mu=\!\!\!\!\!\!\!\!\!\!\!\!\!\!\!\!\!\!\!\!\!
 {\scaleobj{0.9}{\sum_{\phantom{AAAAAA}0=\lambda_0<\lambda_1,\lambda_2,\lambda_3,\cdots}}}
 \!\!\!\!\!\!\!\!\! \!\! f_n \delta_{\{\lambda_n\}}\;\;\mapsto 
\!\!\! \!\!\!\!\!\!\!\!\!\!\!\!
{\scaleobj{0.9}{\sum_{\phantom{AAAAAA}0=\lambda_0<\lambda_1,\lambda_2,\lambda_3,\cdots}}}
\!\!\!\!\!\!\!\!\!\!\!\!\!\! f_n e^{i \lambda_n x} \in \textrm{APW}^+.
 $$
 So for all $n\in \mN$, $\SL_n({\textrm{APW}}^+)=\E_n({\textrm{APW}}^+)$. 

 \goodbreak 
 
 \subsection{The algebra $\textrm{\normalfont AP}^+$ of analytic almost periodic functions} $\;$
 \label{subsection_12_may_2021_18:06} 
 
 \noindent We refer the reader to \cite{Cor} and \cite{BKS} for details on almost periodic functions. 
 For $\lambda \in \mR$, let  $\bbe_\lambda:=e^{i\lambda\;\!\cdot }\in {\textrm{L}}^\infty(\mR)$. 
 Let $\calT$ be the space of trigonometric polynomials, i.e., 
 $\calT$ is the linear span of $\{\bbe_\lambda: \lambda \in \mR\}$. 
 Define $\textrm{AP}$ to be the closure of $\calT$ with respect to the ${\textrm{L}}^\infty(\mR)$-norm $\|\cdot\|_\infty$. Then 
 $\textrm{AP}$ is a Banach algebra with pointwise operations, and the norm $\|\cdot\|_\infty$. 
 The space $\calT$ is equipped with
  the inner product 
 $$
 \langle p,q\rangle =\lim_{R\;\!\rightarrow \;\!+\infty} \frac{1}{2R}\int_{-R}^R p(x)\;\! \overline{q(x)} \;\!dx \quad (p,q\in \calT),
 $$
  where $\overline{\cdot}$ denotes complex conjugation. 
 The limit exists since
$$
\langle \bbe_\lambda, \bbe_{\lambda'}\rangle =\left\{ \begin{array}{ll} 1 & \textrm{if }\; \lambda=\lambda',\\
0 & \textrm{if }\;\lambda \neq \lambda'.
\end{array}\right.
$$
For $\lambda \in \mR$ and $f\in \textrm{AP}$, the {\em Bohr-Fourier coefficient} $f_\lambda$ is defined as follows: 
If $(p_n)_n$ is a sequence in $\calT$ converging to $f$ in $\textrm{AP}$, then 
$$
f_\lambda=\lim_{n\;\!\rightarrow \;\!\infty} \langle p_n,\bbe_{-\lambda}\rangle.
$$  
Define the {\em Bohr spectrum of $f$} to be the set 
 $
\sigma(f)=\{\lambda\in \mR: f_\lambda \neq 0\},
$ 
which is at most countable. 
Let $\textrm{AP}^+$ be the subspace of $\textrm{AP}$ given by 
$$
\textrm{AP}^+=\{f\in \textrm{AP}:\sigma(f)\subset [0,+\infty)\}.
$$
Each $f\in \textrm{AP}^+$ has a holomorphic extension to the upper half-plane
$$
\mU:=\{z\in \mC:\textrm{Im}\;\!z>0\}.
$$ 
The set $\textrm{APW}^+$ is a dense subset of $\textrm{AP}^+$ (since the trigonometric polynomials 
are dense in $\textrm{AP}^+$).  
We will use this, and the following consequence Proposition~\ref{prop_11_may_2021_18:36}, 
 to show that for all $n\in \mN$ we have 
$\SL_n({\textrm{AP}}^+)=\E_n({\textrm{AP}}^+)$. We remark that the Bass stable rank of ${\textrm{AP}}^+$ is $\infty$; see \cite{MorRup}. We have the following result:

\begin{theorem}$\;$
\label{theorem_13_may_2021_14:08}

\noindent 
Let $A$ be a commutative unital Banach algebra. 
Let $S$ be a Banach algebra containing the unit of $A$ such that 
$$
\begin{array}{l}
\bullet\; \textrm{for all }n\in \mN, \;\eSL_n(S)=\eE_n(S),\\
\bullet\; S\textrm{ is a full subalgebra of }A,\;(\textrm{i.e.},\textrm{ if }S\cap  \textrm{\em GL}_1(A)=  \textrm{\em GL}_1(S)),\\
 \bullet\; \textrm{the inclusion map is continuous},\textrm{ and}\\ 
\bullet\;S\textrm{ is dense in }A.  
\end{array}
$$
Then for all $n\in \mN,$ $\eSL_n(A)=\eE_n(A)$.
\end{theorem}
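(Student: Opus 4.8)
The plan is to establish the only nontrivial inclusion, $\SL_n(A)\subseteq \E_n(A)$, the reverse inclusion being automatic. By Proposition~\ref{prop_11_may_2021_18:36} it is enough to show that every $M\in \SL_n(A)$ is null-homotopic; I will instead factor $M=B\,N'$ inside the group $\E_n(A)$, where $N'$ comes from $S$ (so that the hypothesis $\SL_n(S)=\E_n(S)$ applies) and $B$ lies so close to $I_n$ that it can be connected to $I_n$ by an explicit path in $\SL_n(A)$. Throughout I use the operator norm on $A^{n\times n}$, recalling from Lemma~\ref{lemma_14_june_2021_10:24} that it is dominated by the entrywise norms, so that entrywise approximation yields approximation of matrices.

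First I would approximate and correct the determinant. Given $M=[m_{ij}]\in\SL_n(A)$, density of $S$ in $A$ together with continuity of the inclusion lets me choose $s_{ij}\in S$ with $\|M-N\|$ arbitrarily small, where $N=[s_{ij}]\in S^{n\times n}$. Since $\det$ is a polynomial in the entries and multiplication in $A$ is continuous, $\det N$ is close to $\det M=1$; taking the approximation fine enough gives $\|\det N-1\|<1$, so $\det N\in\textrm{GL}_1(A)$. This is where fullness is used: $\det N\in S\cap\textrm{GL}_1(A)=\textrm{GL}_1(S)$, so $d:=\det N$ has inverse $d^{-1}\in S$. Then $N':=N\,\big(I_n+(d^{-1}-1)\,\bbe_{11}\big)$ has entries in $S$ and $\det N'=d\cdot d^{-1}=1$, so $N'\in\SL_n(S)=\E_n(S)\subseteq\E_n(A)$, the final inclusion holding because $S$ is a unital subalgebra of $A$. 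Moreover $N'$ is still as close to $M$ as desired, so, writing $M(N')^{-1}-I_n=(M-N')(N')^{-1}$ and using that $(N')^{-1}$ stays bounded as $N'\to M$, I may arrange $\|M(N')^{-1}-I_n\|<1$.

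It remains to connect $B:=M(N')^{-1}$ to $I_n$. Since $M\in\SL_n(A)$ and $N'\in\SL_n(S)\subseteq\SL_n(A)$, we have $\det B=\det M/\det N'=1$, so $B\in\SL_n(A)$, and by construction $\|B-I_n\|<1$. Put $X:=\log B=\sum_{k=1}^{\infty}\frac{(-1)^{k+1}}{k}(B-I_n)^k\in A^{n\times n}$, which converges by submultiplicativity of the operator norm and satisfies $\exp(X)=B$. The path $t\mapsto\exp(tX)$ has the right endpoints but lands only in $\textrm{GL}_n(A)$, so I correct its determinant: for $t\in[0,1]$ let $c(t):=\det(\exp(tX))\in\textrm{GL}_1(A)$ (invertible with inverse $\det(\exp(-tX))$) and set
$$
\gamma(t):=\exp(tX)\,\big(I_n+(c(t)^{-1}-1)\,\bbe_{11}\big).
$$
Then $\det\gamma(t)=c(t)\cdot c(t)^{-1}=1$, so $\gamma(t)\in\SL_n(A)$; the map $\gamma$ is continuous because $t\mapsto\exp(tX)$, $\det$, and inversion in $\textrm{GL}_1(A)$ are continuous; and $\gamma(0)=I_n$ (as $c(0)=1$) while $\gamma(1)=B$ (as $c(1)=\det B=1$). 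Thus $B$ is null-homotopic, so $B\in\E_n(A)$ by Proposition~\ref{prop_11_may_2021_18:36}, and finally $M=B\,N'\in\E_n(A)$ because $\E_n(A)$ is a group.

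The step I expect to be the main obstacle is keeping the homotopy inside $\SL_n(A)$ rather than merely $\textrm{GL}_n(A)$: the exponential path realizes the correct endpoints but has the wrong intermediate determinants, and the diagonal factor $I_n+(c(t)^{-1}-1)\,\bbe_{11}$ is precisely the device that restores $\det=1$ at every $t$ without moving the endpoints. The only other delicate point is that the approximating determinant be invertible in $S$ and not merely in $A$, which is exactly what the fullness hypothesis provides.
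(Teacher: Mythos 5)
Your proof is correct, and although it shares the paper's key ingredients --- density of $S$ to approximate $M$, fullness to invert the approximant's determinant inside $S$, scaling one column by the inverse determinant to restore $\det=1$, and Proposition~\ref{prop_11_may_2021_18:36} --- it is organized along a genuinely different line. The paper never factors $f\in\SL_n(A)$: it runs a single determinant-corrected path $t\mapsto (I_n+tf^{-1}(g-f))^{-1}g$ joining a column-scaled copy of $g$ (which lies in $\SL_n(S)=\E_n(S)$) to $f$, then applies Proposition~\ref{prop_11_may_2021_18:36} \emph{inside} $S$ to connect that endpoint to $I_n$ within $\SL_n(S)$, pushes this second homotopy into $\SL_n(A)$ using the continuity of the inclusion, and concatenates. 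You instead split $M=BN'$ multiplicatively, dispose of $N'\in\SL_n(S)=\E_n(S)\subseteq\E_n(A)$ by pure algebra, apply Proposition~\ref{prop_11_may_2021_18:36} only once, in $A$, to the near-identity factor $B$, and finish with the group property of $\E_n(A)$. Your route buys two things. First, the continuity of the inclusion $S\hookrightarrow A$ is never actually used (density with respect to the norm of $A$ is all your approximation step needs), whereas the paper needs it precisely to transfer the homotopy $h$ from $\SL_n(S)$ to $\SL_n(A)$; in this sense your argument gets by with slightly weaker hypotheses. Second, your ordering of factors sidesteps a small non-commutativity glitch in the paper's path: as written, its value at $t=1$ is $g^{-1}fg$ rather than $f$ (matrices over a commutative ring need not commute), which is harmless --- one can instead use $g(I_n+tf^{-1}(g-f))^{-1}$, or note that conjugation preserves null-homotopy --- but does require a word. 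The only mild inefficiency on your side is the $\log$/$\exp$ machinery: since $\|B-I_n\|<1$, the straight-line path $t\mapsto I_n+t(B-I_n)$ already stays in $\textrm{GL}_n(A)$ and admits the very same determinant correction $\gamma(t)=\bigl(I_n+t(B-I_n)\bigr)\bigl(I_n+(c(t)^{-1}-1)\;\!\bbe_{11}\bigr)$, which would make your homotopy step exactly as elementary as the paper's interpolation.
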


\goodbreak

\begin{proof}
Let $f\in \SL_n(A)$. As $S$ is dense in $A$, we can find an element $g \in S^{n\times n}$ such that 
$$
\|g-f\|<\frac{1}{\|f^{-1}\|}.
$$
Then for all $t\in [0,1]$, 
$$
\|t f^{-1}(g-f)\|<1\cdot \|f^{-1}\|\cdot \frac{1}{\|f^{-1}\|} =1.
$$
So $I_n+tf^{-1}(g-f)\in \textrm{GL}_n(A)$. As 
 $$
g=f+g-f=f(I_n+f^{-1}(g-f)),
$$  
$g\in \textrm{GL}_n(A)$. Thus $\det g \in S\cap \textrm{GL}_1(A)$, giving $\det g\in \textrm{GL}_1(S)$. Also, 
$(I_n+t f^{-1}(g-f))^{-1} g \in \textrm{GL}_n(A)$ for all $t\in [0,1]$, which implies 
$$
\Delta(t):=\det ((I_n+t f^{-1}(g-f))^{-1} g)\in \textrm{GL}_1(A)
$$
for all $t\in [0,1]$. For $t\in [0,1]$, define the matrix $H(t)\in A^{n\times n}$ to be 
the matrix obtained by scaling (any one, say) the first column of $(I_n+t f^{-1}(g-f))^{-1} g$ 
by $(\Delta(t))^{-1}$. Then 
\begin{eqnarray*}
\det (H(t))\!\!\!\!&=&\!\!\!\! (\Delta(t))^{-1} \det ((I_n+t f^{-1}(g-f))^{-1} g)\\
\!\!\!\!&=&\!\!\!\!(\Delta(t))^{-1} \Delta(t)=1.
\end{eqnarray*}
Hence $H(t)\in \SL_n(A)$, and the map $[0,1]\owns t\mapsto H(t)\in \SL_n(A)$ is continuous. 
We have $H(1)=f$.  Also, $H(0)$ is the matrix obtained by scaling the first column 
of $g$ by $(\det g)^{-1}$. So $H(0)\in \SL_n(S)=\E_n(S)$. By Proposition~\ref{prop_11_may_2021_18:36}, 
there exists a homotopy $h:[0,1]\rightarrow \SL_n(S)$ such that $h(0)=I_n$ and $h(1)=H(0)$. 
Thanks to the fact that the inclusion map for $S\subset A$ is continuous, $h:[0,1]\rightarrow \SL_n(A)$ is also continuous. 
By combining $h$ and $H$, we get a homotopy $\widetilde{H}:[0,1]\rightarrow \SL_n(A)$ 
such that $\widetilde{H}(0)=I_n$ and $\widetilde{H}(1)=H(1)=f$. By  Proposition~\ref{prop_11_may_2021_18:36}, $f\in \E_n(A)$. 
Consequently, $\SL_n(A)=\E_n(A)$ for all $n\in \mN$.
\end{proof}

\begin{corollary}
For all $n\in \mN,$ $\eSL_n({\textrm{\em AP}}^+)=\eE_n({\textrm{\em AP}}^+)$.
\end{corollary}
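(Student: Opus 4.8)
The plan is to apply Theorem~\ref{theorem_13_may_2021_14:08} with $A=\textrm{AP}^+$ and $S=\textrm{APW}^+$. Three of the four hypotheses are essentially immediate. First, $\SL_n(\textrm{APW}^+)=\E_n(\textrm{APW}^+)$ for all $n\in\mN$ has just been established, via the Banach algebra isomorphism $\textrm{APW}^+\cong\calA_0^+$ together with Theorem~\ref{theorem_11_may_2021_20:17}. Second, $\textrm{APW}^+$ contains the unit of $\textrm{AP}^+$, namely the constant function $\bbe_0\equiv 1$. Third, the inclusion $\textrm{APW}^+\hookrightarrow\textrm{AP}^+$ is continuous: writing $f=\sum_{\lambda\geq 0}f_\lambda\bbe_\lambda$ and using $\|\bbe_\lambda\|_\infty=1$, the triangle inequality gives $\|f\|_\infty\leq\sum_{\lambda\geq 0}|f_\lambda|=\|f\|_1$, so the inclusion is in fact norm-decreasing. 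Density of $\textrm{APW}^+$ in $\textrm{AP}^+$ is recorded in the text above, since the trigonometric polynomials with nonnegative frequencies lie in $\textrm{APW}^+$ and are dense in $\textrm{AP}^+$.

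The remaining hypothesis, that $\textrm{APW}^+$ is a full subalgebra of $\textrm{AP}^+$, is the crux, and I would reduce it to the classical Wiener $1/f$ theorem for almost periodic functions, i.e.\ to the inverse-closedness of $\textrm{APW}$ in $\textrm{AP}$. The nontrivial inclusion is $\textrm{APW}^+\cap\textrm{GL}_1(\textrm{AP}^+)\subseteq\textrm{GL}_1(\textrm{APW}^+)$ (the reverse being automatic). So suppose $f\in\textrm{APW}^+$ is invertible in $\textrm{AP}^+$, with inverse $g\in\textrm{AP}^+$. Since $\textrm{AP}^+\subset\textrm{AP}$, the same $g$ shows $f$ is invertible in $\textrm{AP}$, and as $f\in\textrm{APW}\subset\textrm{AP}$ as well, Wiener's theorem yields that the inverse of $f$ lies in $\textrm{APW}$. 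By uniqueness of inverses in the commutative unital algebra $\textrm{AP}$ this inverse equals $g$, so $g\in\textrm{APW}$. But $g\in\textrm{AP}^+$ already, and $\textrm{APW}\cap\textrm{AP}^+=\textrm{APW}^+$ (summable Bohr--Fourier coefficients together with spectrum contained in $[0,+\infty)$), whence $g\in\textrm{APW}^+$, which is precisely fullness.

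The real content, and the step I expect to be the main obstacle, is therefore the inverse-closedness of $\textrm{APW}$ in $\textrm{AP}$. I would obtain this from Gelfand theory of the group algebra: under $f=\sum_\lambda f_\lambda\bbe_\lambda\mapsto(f_\lambda)_\lambda$, the algebra $\textrm{APW}$ is isometrically isomorphic to the convolution algebra $\ell^1(\mR_d)$ of the additive group $\mR$ taken with the discrete topology, whose maximal ideal space is the Bohr compactification $b\mR=\widehat{\mR_d}$. On $b\mR$ the Gelfand transform of $f$ agrees with its unique continuous extension, whose nonvanishing is equivalent to invertibility of $f$ in $\textrm{AP}=C(b\mR)$; hence $f$ is invertible in $\textrm{APW}$ if and only if it is invertible in $\textrm{AP}$. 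This is exactly Wiener's theorem for $\mR_d$, which may either be invoked as a classical fact or derived from the regularity and semisimplicity of $\ell^1(\mR_d)$. Once fullness is secured, Theorem~\ref{theorem_13_may_2021_14:08} applies verbatim and gives $\SL_n(\textrm{AP}^+)=\E_n(\textrm{AP}^+)$ for all $n\in\mN$.
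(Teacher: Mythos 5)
Your proposal is correct, and its skeleton is exactly the paper's: both apply Theorem~\ref{theorem_13_may_2021_14:08} with $A=\textrm{AP}^+$ and $S=\textrm{APW}^+$, and both dispose of unitality, continuity of the inclusion (via $\|f\|_\infty\leq\|f\|_1$), density, and $\SL_n(\textrm{APW}^+)=\E_n(\textrm{APW}^+)$ in the same way. Where you genuinely diverge is the fullness hypothesis, which is indeed the crux. The paper handles it by a one-line citation of the corona theorem for $\textrm{APW}^+$ (Rodman--Spitkovsky); unwinding that route one must know that invertibility of $f$ in $\textrm{AP}^+$ forces $\inf_{\textrm{Im}\;\!z\geq 0}|f(z)|>0$ for the holomorphic extension to the upper half-plane, after which the (nontrivial) corona theorem converts this lower bound into invertibility in $\textrm{APW}^+$. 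You instead stay entirely on the real line, or equivalently on the Bohr compactification: invertibility in $\textrm{AP}^+$ trivially gives invertibility in $\textrm{AP}$; the Bohr--Wiener $1/f$ theorem --- which you correctly derive from the identification of $\textrm{APW}$ with $\ell^1(\mR_d)$, whose maximal ideal space is $b\mR=\widehat{\mR_d}$ and whose Gelfand transform is the continuous extension, together with $\textrm{AP}\cong C(b\mR)$ --- upgrades the inverse $g$ from $\textrm{AP}$ to $\textrm{APW}$ (regularity of $\ell^1(\mR_d)$, which you mention as an option, is not actually needed for this); and the elementary identity $\textrm{APW}\cap\textrm{AP}^+=\textrm{APW}^+$ (both conditions are statements about the Bohr--Fourier coefficients) puts $g$ in $\textrm{APW}^+$. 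Your route is more self-contained and avoids both the corona theorem and any half-plane analysis; what the paper's citation buys is brevity, plus a stronger tool (the corona theorem governs $n$-tuples, not just single functions) that is overkill here. It is also worth noting that the paper's subsequent treatment of the algebras $\textrm{AP}^+_S$ proves fullness by a third route, the Arens--Singer homeomorphism of $M(\textrm{AP}^+_S)$ and $M(\textrm{APW}^+_S)$, which for $S=[0,\infty)$ is closer in spirit to your Gelfand-theoretic argument, though yours works with the maximal ideal space of the full algebra $\ell^1(\mR_d)$ (a group, hence easy to describe) rather than that of the analytic subalgebra.
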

\begin{proof} For $f\in \textrm{APW}^+$, we have 
$\|f\|_\infty\leq \|f\|_1$, showing that the inclusion map is continuous.  
We have $\textrm{APW}^+$ is a dense subset of ${\textrm{AP}}^+$. 
The full subalgebra assumption can be seen to hold by using the corona theorem  
for ${\textrm{APW}}^+$ (see e.g. \cite[Theorem~2.4]{RodSpi}). 
Moreover, we have seen in the previous subsection that 
$\SL_n({\textrm{APW}}^+)\!=\!\E_n({\textrm{APW}}^+)$ for all $n\in \mN$.
\end{proof}

\goodbreak

\subsection{${\textrm{\normalfont APW}}^+_S$ and ${\textrm{\normalfont AP}}^+_S$} 

\noindent Recall that an {\em additive sub-semigroup of the group $(\mR,+)$} 
is a subset $S\subset \mR$ with the properties $0\in S$, and $\lambda+\lambda' \in S$ whenever $\lambda, \lambda' \in S$. 
Given an additive sub-semigroup $S\subset [0,\infty)$, 
we define the Banach subalgebra ${\textrm{APW}}^+_S$ of ${\textrm{APW}}^+$ by 
$$
{\textrm{APW}}^+_S:=\{ f\in {\textrm{APW}}^+: \sigma(f)\subset S\},
$$
with the induced norm $\|\cdot\|_1$ from ${\textrm{APW}}^+$. Similarly, 
we define the Banach subalgebra ${\textrm{AP}}^+_S$ of ${\textrm{AP}}^+$ by 
$$
{\textrm{AP}}^+_S:=\{ f\in {\textrm{AP}}^+: \sigma(f)\subset S\},
$$
with the induced norm $\|\cdot\|_\infty$ from ${\textrm{AP}}^+$. Thus if $S=[0,\infty)$, 
the ${\textrm{APW}}^+_S$ and ${\textrm{AP}}^+_S$ are 
${\textrm{APW}}^+$ and ${\textrm{AP}}^+$, respectively. 

\noindent The Banach algebra ${\textrm{APW}}^+_S$ is isomorphic to the 
Banach subalgebra $\calA_{0,S}^+$ of $\calM^+$ consisting of all measures 
$$
\mu=
\!\!\!\!\!\!\!\!\!\!\!\!\!\!\!\!\!\!\!\!\!\sum_{\phantom{AAAAAA} \lambda_0,\lambda_1,\lambda_2,\lambda_3,\cdots \;\!\in\;\! S}
 \!\!\!\!\!\!\!\!f_n \delta_{\{\lambda_n\}},
$$
where the sequence of coefficients $(f_n)_{n\geq 0}$ is absolutely summable. 
The Banach subalgebra $\calA_{0,S}^+$ of $\calM^+$  has the property (${\mathbf{P}}$). 
So for all $n\in \mN,$ $\SL_n(\calA_{0,S}^+)=\E_n(\calA_{0,S}^+)$, and 
 $\SL_n({\textrm{APW}}_S^+)=\E_n({\textrm{APW}}_S^+)$. 

To get the result for ${\textrm{AP}}_S^+$, we again use Theorem~\ref{theorem_13_may_2021_14:08}. 
The algebra ${\textrm{APW}}_S^+$ is a dense subset of ${\textrm{AP}}_S^+$. Next we show that 
${\textrm{APW}}_S^+$ is a  full subalgebra of ${\textrm{APW}}_S^+$. Let $f\in {\textrm{APW}}_S^+$ 
be such that there exists a $g\in {\textrm{AP}}_S^+$ such that $f\cdot g=\mathbf{1}$, 
where $\mathbf{1}$ is the constant function taking value $1$ everywhere on $\mR$. Taking the Gelfand transform, we obtain 
$$
\widehat{f}\cdot \widehat{g}=\mathbf{1},
$$
 where $\mathbf{1}$ is the constant function taking value $1$ everywhere on the 
 maximal ideal space $M( {\textrm{AP}}_S^+)$ of ${\textrm{AP}}_S^+$. 
 In particular, $\widehat{f}$ does not vanish anywhere on $M( {\textrm{AP}}_S^+)$. 
 By the Arens-Singer theorem \cite[Theorem~4.1]{AreSin} (see also \cite[Theorem~3.1]{Bot}), 
 the maximal ideal spaces of $ {\textrm{AP}}_S^+$ and $ {\textrm{APW}}_S^+$ are homemorphic. 
 Thus the Gelfand transform of $f\in {\textrm{APW}}_S^+$, namely 
$$
\widehat{f}:M({\textrm{APW}}_S^+)\rightarrow \mC,
$$
 is also nowhere zero on $M({\textrm{APW}}_S^+)$. By the elementary theory of 
 Banach algebras (see e.g. \cite[Theorem~2.7]{Gam}), it follows that $f$ is invertible as an element of ${\textrm{APW}}_S^+$. 
So ${\textrm{APW}}_S^+$ is a  full subalgebra of ${\textrm{APW}}_S^+$. 
By Theorem~\ref{theorem_13_may_2021_14:08}, 
we conclude that for all $n\in \mN,$ $\SL_n({\textrm{AP}}_S^+)=\E_n({\textrm{AP}}_S^+)$.

\subsection{Nonexample: $A\subset \calM^+$ failing ($\mathbf{P}$), 
but $\textrm{\normalfont SL}_n(A)=\textrm{\normalfont E}_n(A)$} $\;$

\noindent We show that while the property ($\mathbf{P}$) is sufficient for the 
Banach subalgebra $A\subset \calM^+$ for having $\SL_n(A)=\E_n(A)$ for all $n\in \mN$, it is not necessary. 
We do this by constructing a Banach subalgebra $A$ of $\calM^+$ for which the property ($\mathbf{P}$)  
does not hold, but for which $\SL_n(A)=\E_n(A)$ for all $n\in \mN$.  
This Banach subalgebra was also considered in \cite{Sas0.5}, but 
here we will show that its maximal ideal space has topological dimension $2$, 
which will be used to show $\SL_n(A)=\E_n(A)$ for all $n\in \mN$.

 Let $dx$ denote the Lebesgue measure, and let $\mu\in \calM^+$ be given by 
$$
\mu=\delta+{\scaleobj{0.9}{\int_{{\scaleobj{0.72}{\bullet}}} }}\;\!(2x-3)e^{-x} dx.
$$
Then for $s\in \mC_{{\scaleobj{0.81}{\geq 0}}}$, 
$$
\widehat{\mu}(s)=
{\scaleobj{0.9}{\frac{s(s-1)}{(s+1)^2}}}.
$$
For $m\in \mN$, we denote the $m$-fold convolution $\mu\ast \cdots \ast \mu$ by $\mu^{\ast m}$. 
Let $A\subset \calM^+$ be the Banach subalgebra of $\calM^+$ generated by $\delta, \mu$, that is, $A$ is the closure of all `polynomials' in $\mu$: 
$$
p=c_0 \delta+ c_1 \mu +c_2 \mu^{\ast 2}+\cdots +c_d \mu^{\ast d},
$$
where $d$ is any nonnegative integer, and $c_0,c_1,\cdots, c_d\in \mC$ are arbitrary. 

 Let us first show that $A$ does not have the property ($\mathbf{P}$). We show that for a certain $t$, $\mu_t \not \in A$. In fact, set $t=1-\frac{1}{e}\in (0,1)$.  Since the polynomials in $\mu$ are dense in $A$, given $\epsilon=\frac{1}{10}>0$, there exists a $d\in \mN$, and complex numbers $c_0,c_1,\cdots, c_d$, such that 
$$
\|\mu_t -(c_0 \delta+ c_1 \mu +c_2 \mu^{\ast 2}+\cdots +c_d \mu^{\ast d})\|<\epsilon.
$$
But if $\nu:=\mu_t -(c_0 \delta+ c_1 \mu +c_2 \mu^{\ast 2}+\cdots +c_d \mu^{\ast d})$, then 
for $s\in \mC_{{\scaleobj{0.81}{\geq 0}}}$, 
$$
|\widehat{\mu_t}(s) -(c_0 +c_1 \widehat{\mu}(s)+c_2 (\widehat{\mu}(s))^2+\cdots+
c_d (\widehat{\mu}(s))^d | 
=|\widehat{\nu}(s)|\leq \|\nu\|<\epsilon.
$$
We have $-\log(1\!-t)=-\log \frac{1}{e}=1$, so that the above gives 
$$
|\widehat{\mu}(s+1)-(c_0 +c_1 \widehat{\mu}(s)+c_2 (\widehat{\mu}(s))^2+\cdots+
c_d (\widehat{\mu}(s))^d | <\epsilon.
$$ 
Setting $s=0$, this gives 
$$
\Big| {\scaleobj{0.81}{\frac{(0+1)(0+1-1)}{(0+1+1)^2}}}-c_0\Big|
=
|c_0|<\epsilon. 
$$
On the other hand, with $s=1$, we obtain 
$$
\Big| {\scaleobj{0.81}{\frac{(1+1)(1+1-1)}{(1+1+1)^2}}}-c_0\Big|
=
\Big|{\scaleobj{0.81}{\frac{2}{9}}}-c_0\Big|<\epsilon. 
$$
Adding the last two inequalities gives the contradiction that 
$$
{\scaleobj{0.81}{\frac{2}{9}}}\leq \Big|{\scaleobj{0.81}{\frac{2}{9}}}-c_0\Big|+|c_0|<2\;\!\epsilon=
{\scaleobj{0.81}{\frac{2}{10}}}.
$$
Hence $A$ does not have the property ($\mathbf{P}$). 

\goodbreak 

\noindent Finally, we show that for the Banach algebra $A$, $\SL_n(A)=\E_n(A)$ for all $n\in \mN$. 
To do this, we will use the result from \cite[Theorem~1.1(3)]{Bru}, which says that if the topological dimension $\dim M(R)$ of the maximal ideal space $M(R)$ of a commutative unital complex Banach algebra $R$ is $0,1$ or $2$, then $\SL_n(R)=\E_n(R)$. We will show that the topological dimension of the maximal ideal space $M(A)$ of our Banach algebra $A$ is equal to $2$. (Recall that for a normal space $X$, $\dim X\leq d$ if every finite open cover of $X$ can be refined by an open cover whose order $\leq d+1$. If $\dim X\leq d$ and the statement $\dim X\leq d-1$ is false, then we say that the {\em topological dimension of $X$} is $\dim X=d$. Also, for a commutative complex semisimple unital Banach algebra $R$, the maximal ideal space $M(R)$ (set of all complex homomorphisms)  is equipped with the Gelfand topology, 
which is the induced subset topology from the dual space $\calL(R;\mC)$ equipped with the weak-$\ast$ topology.)
We will need the following result; see \cite[Theorem 1.4, page 68]{Gam}.

\begin{proposition} 
\label{prop_spec=j.spec}
  Let $B$ be a finitely generated Banach algebra$,$ generated by $b_1,
  \dots, b_m$. Then the joint spectrum of $b_1, \cdots, b_m$ in $B,$
  namely the set
 $$
\sigma_B(b_1, \cdots, b_m)=\{ (\widehat{b_1}(\varphi), \dots,
\widehat{b_m}(\varphi)):\varphi \in M(B)\}\;\;(\subset \mC^m),
$$ 
is homeomorphic to the maximal ideal space $M(B)$ of the Banach algebra $B$. 
 $($Here
$\widehat{\cdot}:B\rightarrow C(M(B);\mC)$ denotes the Gelfand transform.$)$
\end{proposition}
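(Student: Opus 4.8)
The plan is to exhibit the evaluation map
$$
\Phi : M(B) \longrightarrow \mC^m, \qquad \Phi(\varphi) = \big(\widehat{b_1}(\varphi), \dots, \widehat{b_m}(\varphi)\big) = \big(\varphi(b_1), \dots, \varphi(b_m)\big),
$$
and to prove that $\Phi$ is a homeomorphism of $M(B)$ onto its image, which by definition is precisely $\sigma_B(b_1, \dots, b_m)$. Since $M(B)$ is compact and $\mC^m$ is Hausdorff, it suffices to check that $\Phi$ is a continuous bijection onto $\sigma_B(b_1,\dots,b_m)$: a continuous bijection from a compact space onto a Hausdorff space is automatically a homeomorphism.

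First I would record the standard facts about $M(B)$ that I am entitled to use. Every complex homomorphism $\varphi$ on a commutative unital Banach algebra is continuous with $\|\varphi\| = 1$, so $M(B)$ sits inside the closed unit ball of the dual $\calL(B;\mC)$; being weak-$\ast$ closed there (the defining relations $\varphi(xy)=\varphi(x)\varphi(y)$ and $\varphi(\mathbf{1})=1$ are preserved under weak-$\ast$ limits), $M(B)$ is weak-$\ast$ compact by the Banach--Alaoglu theorem, and it is Hausdorff because distinct characters are separated by evaluation at some element of $B$. Continuity of $\Phi$ is then immediate: each coordinate $\varphi \mapsto \varphi(b_i)$ is, by the very definition of the Gelfand (weak-$\ast$) topology, a continuous function on $M(B)$, so $\Phi$ is continuous into $\mC^m$.

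The one step that carries the real content is injectivity. Suppose $\Phi(\varphi) = \Phi(\psi)$, i.e.\ $\varphi(b_i) = \psi(b_i)$ for $i = 1, \dots, m$. Because $\varphi$ and $\psi$ are multiplicative and linear and both send $\mathbf{1}$ to $1$, they agree on every polynomial expression in $b_1, \dots, b_m$, hence on the (not necessarily closed) subalgebra $B_0$ generated by the $b_i$. By hypothesis $B$ is \emph{generated} by $b_1, \dots, b_m$, meaning $B_0$ is dense in $B$; combining this density with the automatic continuity of $\varphi$ and $\psi$ shows $\varphi = \psi$ on all of $B$. Thus $\Phi$ is injective, and it is a continuous bijection from the compact space $M(B)$ onto $\sigma_B(b_1,\dots,b_m) \subset \mC^m$.

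Finally I would invoke the compact-to-Hausdorff principle: $\Phi$ maps closed (hence compact) subsets of $M(B)$ to compact, therefore closed, subsets of $\mC^m$, so $\Phi$ is a closed map, and a continuous closed bijection is a homeomorphism. The main obstacle is conceptual rather than computational, namely pinning down injectivity: one must use that the subalgebra generated by the $b_i$ is dense in $B$ together with the automatic continuity of characters, since without density two distinct characters could in principle agree on the generators while differing elsewhere. Everything else---compactness of $M(B)$, Hausdorffness of $\mC^m$, and continuity of $\Phi$---is standard Gelfand theory.
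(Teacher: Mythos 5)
Your proof is correct: the evaluation map is continuous by definition of the Gelfand topology, injective because two characters agreeing on the generators agree on the dense subalgebra of polynomials in them (using automatic continuity of characters), and a continuous bijection from the compact space $M(B)$ onto a subset of the Hausdorff space $\mC^m$ is a homeomorphism. The paper itself gives no proof, citing Gamelin's \emph{Uniform Algebras} (Theorem 1.4, p.~68) instead, and your argument is precisely the standard one found there, so there is nothing to compare beyond noting the match.
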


\noindent So in our case, it suffices to show that the joint spectrum of
$\delta$ and $\mu$ in $A$ has topological dimension $2$. We observe that
\begin{eqnarray}
\nonumber
\sigma_A( \delta, \mu)&=& \{(1, \widehat{\mu}(\varphi)): \varphi \in M(A)\}
= \{1\} \times \{\widehat{\mu}(\varphi): \varphi \in M(A)\}
\\
\label{eq_spec_1}
&=& \{1\} \times \sigma_A (\mu).
\end{eqnarray}
Hence it is enough to show that $\sigma_A (\mu)\subset \mC\simeq \mR^2$ has topological dimension equal to $2$. We recall the following result, which relates the
spectrum of an element $x$ of a subalgebra of a Banach algebra with
the spectrum of $x$ in the Banach algebra; see \cite[Theorem 10.18,
p. 238]{Rud2}.

\begin{proposition}
\label{prop_Rud}
  Let $B$ be a unital Banach algebra$,$ and $S$ be a Banach subalgebra
  of $B$ that contains the unit of $B$. If $x\in S,$ then
  $\sigma_S(x)$ is the union of $\sigma_B(x)$ and a $($possibly empty$)$
  collection of bounded components of the complement of $\sigma_B(x)$.
\end{proposition}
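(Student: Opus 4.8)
The plan is to analyse the two resolvent sets $\mC\setminus\sigma_S(x)$ and $\mC\setminus\sigma_B(x)$ and to show that shrinking the ambient algebra from $B$ to $S$ can only ``fill in'' certain bounded holes of $\sigma_B(x)$. First I would record the easy containment: if $\lambda\notin\sigma_S(x)$, then $x-\lambda$ is invertible in $S$, and its inverse computed in $S$ is a fortiori an inverse in $B$, so $\lambda\notin\sigma_B(x)$; thus $\sigma_B(x)\subseteq\sigma_S(x)$. Writing $\Omega:=\mC\setminus\sigma_B(x)$ for the open resolvent set in $B$, it remains to understand $\Omega\cap\sigma_S(x)$ and to show that it is a union of whole bounded connected components of $\Omega$.

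The core of the argument is a connectedness (clopen) dichotomy carried out separately on each connected component $G$ of $\Omega$. For $\lambda\in G$ the element $(x-\lambda)^{-1}$ exists in $B$, and I set $U:=\{\lambda\in G:(x-\lambda)^{-1}\in S\}$. I would show $U$ is open in $G$ by a Neumann-series perturbation: if $\lambda_0\in U$, then for $\lambda$ near $\lambda_0$ the inverse $(x-\lambda)^{-1}$ is given by a norm-convergent power series in $(x-\lambda_0)^{-1}$ with scalar coefficients, and since $S$ is a closed subalgebra containing $(x-\lambda_0)^{-1}$, every partial sum lies in $S$ and hence so does the limit. Conversely $U$ is closed in $G$: if $\lambda_n\in U$ with $\lambda_n\to\lambda_0\in G$, then by continuity of the resolvent map in $B$ we have $(x-\lambda_n)^{-1}\to(x-\lambda_0)^{-1}$ in $B$, and closedness of $S$ in $B$ forces $(x-\lambda_0)^{-1}\in S$, i.e. $\lambda_0\in U$. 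Since $G$ is connected and $U$ is both open and closed in $G$, either $U=G$ or $U=\emptyset$; that is, each component of $\Omega$ lies entirely in $\mC\setminus\sigma_S(x)$ or entirely in $\sigma_S(x)$.

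To pin down the unbounded component $G_\infty$ of $\Omega$, I would note that for $|\lambda|>\|x\|$ the Neumann series $(x-\lambda)^{-1}=-\lambda^{-1}\sum_{k\geq 0}(x/\lambda)^k$ converges in $B$; its partial sums are polynomials in $x$, hence lie in $S$, and closedness of $S$ gives $(x-\lambda)^{-1}\in S$. All such large $\lambda$ belong to $G_\infty$, so $U\cap G_\infty\neq\emptyset$, and the dichotomy yields $G_\infty\subseteq\mC\setminus\sigma_S(x)$. Combining this with the dichotomy over all components shows that $\sigma_S(x)$ is precisely $\sigma_B(x)$ together with exactly those components $G$ of $\Omega$ on which $U=\emptyset$, each of which is necessarily bounded since the unbounded one is excluded. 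This is the claimed description.

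I expect the main obstacle to be the clopen argument of the second paragraph, and in particular the verification that $U$ is open via the perturbation series, which is where the algebraic structure (closure of $S$ under the relevant limits of polynomials in $(x-\lambda_0)^{-1}$) is genuinely used; the easy containment and the unbounded-component computation are then routine once continuity of the resolvent in $B$ and closedness of $S$ in $B$ are invoked.
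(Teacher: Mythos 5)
Your proof is correct and complete: the easy containment $\sigma_B(x)\subseteq\sigma_S(x)$, the clopen dichotomy for $U=\{\lambda\in G:(x-\lambda)^{-1}\in S\}$ on each component $G$ of the $B$-resolvent set, and the Neumann series for $|\lambda|>\|x\|$ to settle the unbounded component together give exactly the claimed description. The paper offers no proof of its own, citing the result as \cite[Theorem 10.18, p.~238]{Rud2}, and the argument given there is essentially identical to yours, so there is nothing to bridge.
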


\noindent We apply this with $B\!:=\!
\mC\;\! \delta \!+\!{\textrm{L}}^{1}[0,+\infty)$ (the Wiener-Laplace algebra),  
$S\!:=\!A$, and $x\!:=\!\mu\in A \subset B
\!=\!
\mC\;\! \delta\! +\!{\textrm{L}}^{1}[0,+\infty)$. The maximal ideal space of
$B\!=\!\mC\;\! \delta \!+\!{\textrm{L}}^{1}[0,+\infty)$ can be identified (see e.g. \cite[pp.~112-113]{grs}) as a topological space with  
$$
\{s
\in (\mC\cup\{\infty\}) : \textrm{ and Re}(s) \geq 0\}.
$$ 
\noindent (Here  we identify $\mC\cup\{\infty\}$ with the Riemann sphere.) The identification is done via the Laplace transform: For 
$$
\nu=\alpha \;\!\delta + \displaystyle \int_{{\scaleobj{0.72}{\bullet}}} f\;\! dx \in \mC\;\! \delta +{\textrm{L}}^{1}[0,\infty),
$$
 and $s\in \mC_{{\scaleobj{0.81}{\geq 0}}}$, 
$$
\widehat{\nu}(s):= \alpha +\int_0^\infty f(x)e^{-sx} dx ,\quad \textrm{and}\quad  
 \widehat{\nu}(\infty):=\alpha.
 $$
  Consequently,
$$
\sigma_{B} (\mu) 
\!=\!
\Big\{ {\scaleobj{0.9}{ \frac{s (s\!-\!1)}{(s\!+\!1)^2}}} : s \in (\mC\cup\{\infty\} ),
  \textrm{ Re}(s)\! \geq \!0\Big\} 
 \!\stackrel{s=\frac{1+z}{1-z}}{=}\!
\Big\{ {\scaleobj{0.9}{\frac{z\!+\!z^2}{2}}} : |z| \!\leq \!1\Big\}.
$$
It can be shown that this last set is the interior of a
simple closed curve $C$ shown as the bold curve  in Figure~\ref{spec}. 
Thus the complement of $\sigma_{B} (\mu) $ has no bounded components. By Proposition~\ref{prop_Rud}
we conclude that $\sigma_{A}
(\mu)=\sigma_{B}
(\mu)$. So $\sigma_{A} (\mu)$ has topological dimension equal to $2$. 
From \eqref{eq_spec_1}, also $\sigma_A( \delta, \mu)= \{1\} \times \sigma_A (\mu)$. So $\sigma_A( \delta, \mu)$ has topological dimension equal to $2$ too.  By Proposition~\ref{prop_spec=j.spec},
$\dim M(A)$ is equal to $2$.

\begin{figure}[H]
    \center  
    \psfrag{m}[c][c]{${\scaleobj{0.93}{-1}}$}
    \psfrag{p}[c][c]{${\scaleobj{0.93}{1}}$}
    \includegraphics[width=6.3cm]{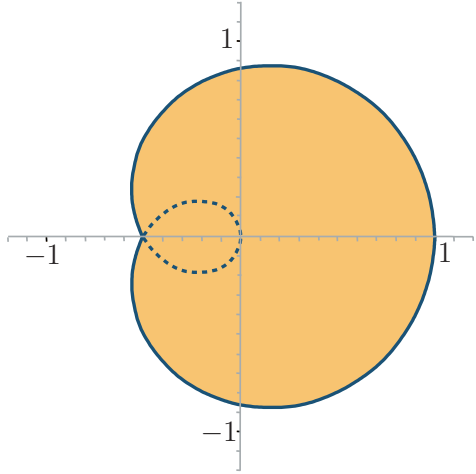}
     \caption{{\small The simple closed curve $C$ is depicted by the bold line.
      The bold curve $C$, with the dotted
      curve, is together the curve $\theta \mapsto
      (e^{i\theta}+e^{2i\theta})/2:[0,2\pi] \rightarrow \mC$.}}
    \label{spec}
\end{figure}

\subsection{Subalgebras $A_d$ of $\calM^+$ with $\dim M(A_d)=2d$} 
 In the above, we constructed a Banach sub-algebra $A$ of $\calM^+$ 
whose maximal ideal space $M(A)$ has topological dimension equal to $2$. 
Now, for each $d\in \mN$, we construct a Banach subalgebra $A_d$ of $\calM_+$ such that the  topological dimension of their maximal ideal space $M(A_d)$ is $2d$, and for which there also holds  $\SL_n(A_d)=\E_n(A_d)$ for all $n\in \mN$. 

\goodbreak

 \noindent Set $\mD=\{z\in \mC:|z|<1\}$. Let $\mD^{\;\!d}:=\mD\times \cdots \times \mD$ ($d$ times) be the $d$-dimensional polydisc in $\mC^d$. Every holomorphic $f:\mD^{\;\!d} \rightarrow \mC$ has a Taylor expansion, and we denote the Taylor coefficients of $f$ by $f_{\bbn}$: 
$$
f(\bbz)=\sum_{\bbn\;\!\in\;\! (\mZ_{{\scaleobj{0.81}{\geq 0}}})^d} \;\!f_{\bbn} \bbz^\bbn \quad 
(\bbz=(z_1,\cdots, z_d)\in \mD^{\;\!d}),
$$
where $\mZ_{{\scaleobj{0.72}{\geq 0}}}:=\{0,1,2,3,\cdots\}$, and 
we use the usual multi-index notation 
$$
\bbz^\bbn=z_1^{n_1}\cdots z_d^{n_d}
$$
 for $\bbz=(z_1,\cdots, z_d)\in \mD^d$ and $\bbn=(n_1,\cdots, n_d)\in (\mZ_{{\scaleobj{0.72}{\geq 0}}})^d$. 
The {\em Wiener algebra} is the set ${\textrm{W}}^+(\mD^d)$ of all holomorphic $f:\mD^d \rightarrow \mC$ 
such that its Taylor coefficients are summable:
$$
\|f\|_1:=\sum_{\bbn\;\!\in\;\! (\mZ_{{\scaleobj{0.81}{\geq 0}}})^d}\;\! |f_{\bbn}|<\infty.
$$
With pointwise operations, and the norm $\|\cdot\|_1$, ${\textrm{W}}^+(\mD^{\;\!d})$ is a Banach algebra. The maximal ideal space $M({\textrm{W}}^+(\mD^{\;\!d}))$ of ${\textrm{W}}^+(\mD^{\;\!d})$ is 
homeomorphic to the the closure $\overline{\mD^{\;\!d}}$ of $\mD^{\;\!d}$ in $\mC^d$ via the map 
$$
\overline{\mD^{\;\!d}} \owns \bbz \mapsto \pmb{(}\;\! {\textrm{W}}^+(\mD^{\;\!d})\owns f\mapsto f(\bbz)\in \mC\;\!\pmb{)}\in M({\textrm{W}}^+(\mD^{\;\!d})).
$$  
Hence $\dim (M({\textrm{W}}^+(\mD^{\;\!d})))=2d$. 

 Let the set $B=\{e_i: i\in I\}$ be a Hamel basis for the vector space $\mR$ over $\mQ$ 
(with vector space addition and scalar multiplication given by the usual arithmetic operations). 
We can always replace $e_i$ by $-e_i$ to ensure that all the $e_i$ are strictly positive. 
For $d\in \mN$, take any distinct $e_{i_1},\cdots, e_{i_d}$ belonging to $B$, and set 
$$
S=\{n_1 e_{i_1}+\cdots+ n_d e_{i_d}: n_1,\cdots, n_d\in \mZ_{{\scaleobj{0.72}{\geq 0}}}\}\subset [0,\infty).
$$
Then $S$ is an additive sub-semigroup of the group $(\mR,+)$, and the Banach algebra ${\textrm{APW}}_S^+$ is isomorphic as a Banach algebra to ${\textrm{W}}^+(\mD^{\;\!d})$ via the map 
$$
{\textrm{W}}^+(\mD^d)\owns f=\sum_{\bbn\in (\mZ_{{\scaleobj{0.81}{\geq 0}}})^d} f_{\bbn} \bbz^\bbn \mapsto 
\sum_{\bbn\in (\mZ_{{\scaleobj{0.81}{\geq 0}}})^d} f_{\bbn} e^{i (n_1 e_{i_1}+\cdots+ n_d e_{i_d}) x} \in {\textrm{APW}}_S^+.
$$
We have seen that the Banach algebra $\calA_{0,S}^+$ is isomorphic as a Banach algebra to ${\textrm{APW}}_S^+$. We take $A_d=\calA_{0,S}^+$.  We have 
$$
\dim (M(\calA_{0,S}^+))=\dim (M({\textrm{APW}}_S^+))=\dim (M({\textrm{W}}^+(\mD^{\;\!d})))=2d.
$$ 
We have already seen that 
for all $n\in \mN,$ $\SL_n(\calA_{0,S}^+)=\E_n(\calA_{0,S}^+)$. 
 
\goodbreak 

\section*{Appendix} 

\begin{proposition}
Let $\mu \in \calM^+$. Then 
 $$
\displaystyle \lim_{\mR\;\! \owns\;\! s\;\!\rightarrow \;\!+\infty} \widehat{\mu}(s)=\mu(\{0\}).
$$ 
\end{proposition}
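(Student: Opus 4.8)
The plan is to reduce everything to the point mass at the origin, exploiting that $e^{-sx}\to 0$ as $s\to+\infty$ for every fixed $x>0$ while $e^{-sx}=1$ at $x=0$; thus the only surviving contribution should be $\mu(\{0\})$. Concretely, I would first split off the atom at $0$ by writing $\mu=\mu(\{0\})\delta+\nu$ with $\nu:=\mu-\mu(\{0\})\delta\in\calM^+$, so that $\nu(\{0\})=0$. Since $\widehat{\delta}(s)=1$, linearity of the Laplace transform gives $\widehat{\mu}(s)=\mu(\{0\})+\widehat{\nu}(s)$, and it remains only to prove $\widehat{\nu}(s)\to 0$ as $s\to+\infty$.

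For that remaining estimate I would mimic part $2^\circ$ of the proof of Lemma~\ref{lemma_11_may_2021_20:28}. Given $\epsilon>0$, use that $\nu(\{0\})=0$ (equivalently $|\nu|(\{0\})=0$, since on a singleton $|\nu|=|\nu(\cdot)|$) together with continuity from above of the positive measure $|\nu|$ to choose $r>0$ with $|\nu|([0,r])<\epsilon$. Writing $d\nu=e^{iw}\,d|\nu|$ for a measurable phase $w$ via the polar decomposition recalled before Lemma~\ref{lemma_11_may_2021_20:28}, I would bound
$$
|\widehat{\nu}(s)|\leq \int_{[0,r]}e^{-sx}\,d|\nu|(x)+\int_{(r,+\infty)}e^{-sx}\,d|\nu|(x)\leq |\nu|([0,r])+e^{-sr}\|\nu\|\leq \epsilon+e^{-sr}\|\nu\|.
$$
Letting $s\to+\infty$ kills the last term, so $\limsup_{s\to+\infty}|\widehat{\nu}(s)|\leq\epsilon$; as $\epsilon>0$ was arbitrary, $\widehat{\nu}(s)\to 0$, which completes the argument.

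The main (and essentially the only) obstacle is the lack of uniform decay of $e^{-sx}$ near $x=0$: on any fixed tail $(r,+\infty)$ the factor $e^{-sr}$ forces the integral to vanish, but the mass of $|\mu|$ that may accumulate at the origin is exactly what produces the limit $\mu(\{0\})$ and so must be isolated beforehand. Everything else is a routine dominated-type estimate, and one could equally phrase it as a single application of the dominated convergence theorem along an arbitrary sequence $s_n\to+\infty$, with dominating function $\mathbf{1}\in \mathrm{L}^1(|\mu|)$ and pointwise limit $\mathbf{1}_{\{0\}}$.

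Alternatively, I could avoid re-deriving the estimate and instead invoke Lemma~\ref{lemma_11_may_2021_20:28} directly. By the identity $\widehat{\mu_t}(0)=\widehat{\mu}(-\log(1-t))$ recorded in Remark~\ref{motivation_definition_14_may_2021_20:29}, the substitution $s=-\log(1-t)$ turns $s\to+\infty$ into $t\to 1^-$; since the functional $\calM^+\owns\lambda\mapsto\widehat{\lambda}(0)=\lambda([0,+\infty))$ has norm at most $1$ and $\mu_t\to\mu_1=\mu(\{0\})\delta$ in $\calM^+$ by the $t_0=1$ case of that lemma, one obtains $\widehat{\mu}(s)=\widehat{\mu_t}(0)\to\widehat{(\mu(\{0\})\delta)}(0)=\mu(\{0\})$. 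This route is shorter but leans on the already-proved convergence $\mu_t\to\mu(\{0\})\delta$, whereas the direct estimate above is fully self-contained.
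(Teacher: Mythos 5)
Your proposal is correct and is essentially the paper's own proof: split off the atom via $\nu=\mu-\mu(\{0\})\delta$, use the polar decomposition $d\nu=e^{iw}\,d|\nu|$, choose $r>0$ with $|\nu|([0,r])<\epsilon$, and bound the tail by $e^{-sr}\|\nu\|$ to conclude $\limsup_{s\to+\infty}|\widehat{\nu}(s)|\leq\epsilon$; the only difference is that the paper performs the reduction to the case $\mu(\{0\})=0$ at the end rather than at the start, which is purely cosmetic. The alternative routes you sketch (dominated convergence, or invoking Lemma~\ref{lemma_11_may_2021_20:28} via $s=-\log(1-t)$) are also valid but unnecessary.
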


\goodbreak 

\begin{proof} 
First suppose that $\mu$ satisfies $\mu(\{0\})=0$. Let $\epsilon>0$. There exists an $r>0$ such that $|\mu|([0,r])<\epsilon$, thanks to the assumption that $\mu(\{0\})=0$. Let $w$ be a Borel measurable function such that $\mu$ has the polar decomposition $d\mu(x)=e^{-iw(x)} d|\mu|(x)$. Then 
\begin{eqnarray*}
|\widehat{\mu}(s)|
\!\!\!\!&=&\!\!\!\! \Big| \int_{[0,+\infty)} \!\!e^{-sx} e^{-iw(x)} d|\mu|(x)\Big|\\
\!\!\!\!&\leq &\!\!\!\! \Big| \int_{[0,r]} \!\!e^{-sx} e^{-iw(x)} d|\mu|(x)\Big| 
+\Big|\int_{(r,+\infty)} \!\!e^{-sx} e^{-iw(x)} d|\mu|(x)\Big|\\
\!\!\!\!&\leq &\!\!\!\! \int_{[0,r]} \!\!1\;\!d|\mu|(x)+e^{-sr}\int_{(r,+\infty)}\!\! 1\;\!d|\mu|(x)\leq \epsilon +e^{-sr} |\mu|((r,+\infty)).
\end{eqnarray*}
So $\displaystyle \limsup_{s\;\!\rightarrow \;\!+\infty} |\widehat{\mu}|\leq \epsilon$. As $\epsilon$ was arbitrary, 
 $
 \displaystyle
 \limsup_{s\;\!\rightarrow \;\!+\infty} |\widehat{\mu}|\leq 0.
 $
 As $|\widehat{\mu}(s)|\geq 0$, 
 $$
 \displaystyle \lim_{s \;\!\rightarrow \;\!+\infty}|\widehat{\mu}(s)|=0, \textrm{ and  }  
 \displaystyle 
 \lim_{s \;\!\rightarrow \;\!+\infty}\widehat{\mu}(s)=0.
 $$ 
 If $\mu(\{0\})\neq 0$, we complete the proof by considering $\nu:=\mu-\mu(\{0\})\delta$, which satisfies 
 $\nu(\{0\})=0$ and $\widehat{\nu}=\widehat{\mu}-\mu(\{0\})$.
\end{proof}

\end{document}